\newtheorem{theorem}{Theorem}[section]
\newtheorem{lemma}[theorem]{Lemma}
\newtheorem{corollary}[theorem]{Corollary}
\newtheorem{claim}[theorem]{Claim}
\newtheorem*{remark}{Remark}
\DeclarePairedDelimiter{\ceil}{\lceil}{\rceil}
\newtheorem{assumption}{Assumption}
\newcommand{\vertiii}[1]{{\left\vert\kern-0.25ex\left\vert\kern-0.25ex\left\vert #1 
		\right\vert\kern-0.25ex\right\vert\kern-0.25ex\right\vert}}
\title{Fast rates for prediction with limited expert advice}
\author{El Mehdi Saad$^{1}$ , Gilles Blanchard$^{1,2}$ \\
  $^1$Laboratoire de Mathématiques d'Orsay, CNRS, Université Paris-Saclay; $^2$Inria\\}
\date{}
\begin{document}
	\maketitle

	\begin{abstract}
		We investigate the problem of minimizing the excess generalization error with respect to the best expert prediction
		in a finite family in the stochastic setting,  under limited access to information. We assume that the learner  only has access to a limited number of expert advices per training round, 
		as well as for prediction.  Assuming that the loss function is Lipschitz and strongly convex, we show that if we are allowed to see the advice of only one expert per round for $T$ rounds in the training phase, or to use the advice of only one expert for prediction in the test phase, the worst-case excess risk is ${\Omega}(1/\sqrt{T})$ with probability lower bounded by a constant. However,
		if we are allowed to see at least two actively chosen expert advices per training round and use at least two experts for prediction, the fast rate $\mathcal{O}(1/T)$ can be achieved. We design novel algorithms achieving this rate in this setting, and in the setting where the learner has a budget constraint on the total number of observed expert advices,  and give precise instance-dependent bounds on the number of training rounds and queries needed to achieve a given generalization error precision.
	\end{abstract}

\medskip\noindent
\textbf{Keywords:} Online Learning, Budgeted Learning, Prediction with expert advice. 

\section{Introduction and setting}

We consider a generic prediction problem in a stochastic setting:
a target random variable $Y$ taking values in $\cY$ is to be predicted by a user-determined forecast $F$, also modeled
as a random variable, taking values in
a closed convex subset $\cX$ of $\mbr^d$. The mismatch between the two is measured via a loss function
$l(F,Y)$. The quality of the agent's output is measured by its generalization risk 
\begin{equation*}
  R(F) := \e[1]{ l\paren[0]{F, Y}}.
\end{equation*}
To assist us in this task, the forecast or ``advice'' of a number of ``experts''
$(F_1,\ldots,F_K)$ (also modeled as random variables) can be
requested. The agent's objective is to achieve a risk as close
as possible to the risk of the best expert
$R^*=\min_{i \in \intr{K}} R(F_i)$ (for a nonnegative integer $n$, we denote
$\intr{n}=\set{1,\ldots,n}$ ).
We measure the
performance of the user's forecast via its excess risk (or average regret) with
respect to that best expert.

The literature on expert advice generally considers the {\em cumulative} regret over a sequence of forecasts $F_t$ followed by
observation of the target variable $Y_t$ and incurring the loss $l(F_t,Y_t)$, $t=1,\ldots,T$. In the present work we will
separate observation (or training) phase and forecast phase: the user is allowed to observe (some of) the expert's predictions 
and the target variable for a number of independent, identically distributed rounds
$\left(Y_t, F_{1,t}, \dots, F_{K,t}\right)_{1\leq t \leq T}$
following certain rules to be specified. After the observation phase,
the user must decide of a prediction strategy, namely a convex combination of the experts
$\wh{F} = \sum_{i=1}^k \wh{w}_i F_i$, where the weights $\wh{w}_i$ can be chosen
based on the information gathered in the training phase. The risk of this strategy is $R(\wh{F})$, where the risk is evaluated
on new, independent data. In other words, if the training phase takes place over $T$ independent rounds, the
forecast risk is the expected loss over the $(T+1)$th, independent, round.

In some situations, it may be overly expensive to query the advice of all experts at each round.
The cost can be monetary if each expert demands to be paid to reveal his opinion, possibly because they have
access to some information that others do not. In this case we may have a total limit
on how much we can spend. In a different context,
it is unrealistic to ask for the advice of all available doctors or to run a large battery of tests on each patient. In this case, we may be have a strong limit on the number of expert opinions
that can be consulted for each training instance.
In a more typical machine learning scenario, each ``expert'' might be a fixed prediction method $F_i=f_i(X)$
(using the information of a covariate $X$), where the predictor functions $f_i$
have been already trained in advance, albeit based on different sets of parameters or methodology; the goal then amounts to predictor selection or aggregation, in a situation where the computation of each single prediction constitutes the bottleneck cost, rather than data acquisition. 
Overall the agent's goal is to achieve a risk close to optimal while sparing on the number of experts queries --
both at training time and for forecast.

Motivated by these questions we investigate several
scenarios for prediction with limited access to expert
advice. Furthermore, our emphasis is on obtaining {\em fast
  convergence rates} guarantees on the excess risk (i.e. $O(1/T)$ or $O(1/C)$, where $C$ is
the total query budget). These are possible
under a strong convexity assumption of the loss, specified below. Our contributions are the following.

\begin{itemize}
\item As a preliminary, we revisit (Section~\ref{sec:fullinfo}) the {\em full information setting}, with no limitations on queries. Maybe surprisingly, we contribute a new algorithm that is both simpler than existing ones and for which the proof of the fast convergence rate for excess risk is also
  elementary. Furthermore, for forecast we only need to consult 2 experts. The general principle of
  this algorithm will be reused in the limited observation settings.
\item We then investigate (Section~\ref{sec:bud}) the {\em budgeted setting} where we have a total
  query budget constraint $C$ for the training phase; then (Section~\ref{sec:lim}) the {\em two-query} setting where the agent is limited to $m=2$ queries per training round. In both cases,
  we give precise efficiency guarantees on the number of training expert queries needed to achieve a given precision for forecast. The obtained bounds come both in {\em instance-independent} (agnostic) and
  {\em instance-dependent} (depending on the experts' structure) flavors.
\item Finally, we give some lower bounds (Section~\ref{sec:low}) were we show that fast rates cannot
  be achieved if the agent is only allowed to consult one single expert per training round
  {\em or} for forecast.
\end{itemize}

The following assumption on the loss will be made throughout the paper:
\begin{assumption}\label{assump}
  $\forall y \in \mathcal{Y}$: $x \in \cX \subseteq \mbr^d \mapsto l\left(x,y\right)$ is $L$-Lipschitz and $\rho$-strongly convex.
\end{assumption}
Recall that a function $f: \mathcal{X} \to \mathbb{R}$ is $L$-Lipschitz if $\forall x,y \in \mathcal{X}$:$\left| f(x) - f(y)\right| \le L \norm{x-y}$, and $\rho$-strongly convex if the function: $x \to f(x)-\frac{\rho^2}{2} \norm{x}^2$ is convex.

	{\bf Remarks.} Assumption~\ref{assump} implies that the diameter of $\mathcal{X}$ is bounded
	by $8 L/\rho^2$ and the quantity $\sup_{x,x' \in \mathcal{X}, y\in \mathcal{Y}} \left| l(x,y) - l(x',y)\right|$ is bounded by $B:= 8 L^2/\rho^2$ (this notation shorthand will be used
        throughout the paper). Consequently, without loss of generality
	we can assume that the loss is bounded by $B$ (see Lemma~\ref{assump_cons} and subsequent
	discussion for details).
	It is satisfied, for example, in the following setting: least square loss $l(x,y) = (y-x)^2$ where $x \in \mathcal{X}$ and $y \in \mathcal{Y}$ with $\mathcal{X}$ and $\mathcal{Y}$ are bounded subsets of $\mathbb{R}^d$.
	Prior knowledge on $\rho$ is not necessary if $L$ and an upper bound on the the $l_{\infty}$ norm of the target variable $Y$ and the experts are known.

	\section{Discussion of related Work} 

	\textbf{Games with limited feedback (slow rates):} Our work investigates what happens between the full information and single-point feedback games. Learning with a restricted access to information was considered  under various settings in \cite{ben1998learning}, \cite{madani2004active}, \cite{guha2007approximation}, \cite{mannor2011bandits}, \cite{audibert2010regret}. A setting close to ours was considered in \cite{seldin2014prediction},
	where the agent chooses in each round a subset of experts to observe their advice, then follows the prediction of one expert. To minimize the cumulative regret in the adversarial setting, they used an extension of the Exp3 algorithm, which allows to have an excess risk of $\mathcal{O}(\sqrt{1/T})$ in the limited feedback setting and $\mathcal{O}(\sqrt{\log(C)/C})$ in the budgeted case with a budget $C$.
	
	The differences in the setting considered here is that (a) we are interested in the generalization
	error in the stochastic setting rather than the cumulative regret in an adversarial setting and
	(b) our assumptions of the convexity of the loss allow for the possibility of fast excess risk convergence.
	Moreover, we consider the more general case where the player is allowed to combine $p$ out of $K$ experts for prediction. The possibility of playing a subset of arms was considered in the literature of Multiple Play Multi-armed bandits. It was treated with a budget constraint in \cite{zhou2018budget} for example (see also \cite{xia2016budgeted}), where at each round, exactly $p$ out of $K$ possible arms have to be played. In addition to observing the individual rewards for each arm played, the player also learns a vector of costs which has to be covered with an a-priori defined budget $C$. In the stochastic setting, a UCB-type procedure gives a bound for the cumulative regret of $\mathcal{O}(\Delta_{\min}^{-1}\log(C)/C)$ that holds only in expectation, where $\Delta_{\min}^{-1}$ denotes the gap between the best choice of arms and the second best choice. This bound leads to an instance dependent bound of $\mathcal{O}(\sqrt{\log(C)/C})$ in the worst case. In the adversarial setting, an extension of Exp3 procedure gives a bound of $\mathcal{O}(\sqrt{\log(C)/C})$ for the cumulative regret that holds with high probability. In another online problem, where the objective is to minimize the cumulative regret in an adversarial setting with a  small effective range of losses, \cite{gerchinovitz2016refined} have shown the impossibility of regret scaling with the effective range of losses in the bandit setting, while \cite{thune2018adaptation} showed that it is possible to circumvent this impossibility result if the player is allowed one additional observation per round. However, in the settings considered, it is impossible to achieve a regret dependence on $T$ better than the rate of $\mathcal{O}(1/\sqrt{T})$.

	\textbf{Fast rates in the full information setting:} The learning task of doing as well as the best expert of a finite family in the sense of generalization error has been studied quite extensively in the full information case. 
	In an adversarial setting, it is well-known that under suitable assumptions on the loss function (typically
	related to strong convexity), an appropriately tuned exponential weighted average (EWA) strategy has cumulative regret bounded by the ``fast rate''
	$\mtc{O}(\log(K)/T)$ \cite{haussler1998sequential,cesa2006prediction,audibert2009fast}, which, combined with the online-to-batch conversion principle \cite{cesabianchi2004generalization,audibert2009fast} (also known as
	progressive mixture rule, \cite{catoni97mixture,yang1999information}), yields a bound of the same order for the {\em expected} excess prediction risk in the stochastic case. However, it was shown that progressive mixture type rules are {\em deviation suboptimal}
	for prediction \cite{audibert2007progressive}, that is, their excess risk takes a value larger than $c/\sqrt{T}$ with
	constant positive probability over the training phase. To lift the apparent contradiction between the two last statements, consider that the
	excess risk of the EWA can take {\em negative} values, since it is an {\em improper} learning rule. Thus
	negative and positive ``large'' deviations can compensate each other so that the expectation is small.
	The inefficiency of EWA in deviation is a significant drawback, and alternatives to the EWA progressive mixture rule that achieve $\mtc{O}(\log(K)/T)$ excess prediction risk with high probability were proposed by \cite{lecue2009aggregation} and \cite{audibert2007progressivesupp}. In \cite{lecue2009aggregation}, the strategy consists in whittling down the set of experts
	by elimination of obviously suboptimal experts, and performing empirical
	risk minimization (ERM) over the convex combinations of the remaining experts.
	In \cite{audibert2007progressivesupp}, the {\em empirical star} algorithm consists
	in performing an ERM over all segments consisting of a two-point convex combination of
	the ERM expert and any other expert.
	Note that the empirical star algorithm has the advantage that the final prediction rule
	is a convex combination of (at most) {\em two} experts.

	\textbf{Linear regression with partially observed attributes:} Other related work is that of \cite{cesa2011efficient}, and \cite{hazan2011optimal} on learning linear regression models with partially observed attributes. The most related setting to ours is the local budget setting, where the learner is allowed to output a linear combination of features for prediction. The key idea is to use the observed attributes in order to build an unbiased estimate of the full information sample, then to use an optimization procedure to minimize the penalized empirical loss. In our setting, the minimization of penalized empirical loss was shown to be suboptimal (see \cite{lecue2007suboptimality}). Moreover, while we want to predict as well as the best expert, in \cite{cesa2011efficient}, the objective is to be as good as the best linear combination of features with a small additive term (the optimal rate, in this case, is $\mathcal{O}\paren[1]{{1}/{\sqrt{T}}}$). Finally, we consider that the restriction on observed attributes (experts advice) does not apply only to the training samples but also to the testing data.

	\textbf{Online convex optimization with limited feedback:} The idea of using multiple point feedback to achieve faster rates appeared in the online convex optimization literature (see \cite{agarwal2010optimal}, and \cite{shamir2017optimal}). It was shown that in the setting where the adversary chooses a loss function in each round if the player is allowed to query this function in two points, it is possible to achieve minimax rates that are close to those achievable in the full information setting. The key idea is to build a randomized estimate of the gradients, which are then fed into standard first-order algorithms. These ideas are not convertible into our setting because we consider a non-convex set of experts.

	\section{The full information case}\label{sec:fullinfo}
	
	In this section, we revisit the ``classical'' case where there is no constraint on the number of expert queries per
	observation round; assume the output of all experts are observed for $T$ rounds (in other words, $T$ i.i.d. training examples), which is the full information
	or ``batch'' setting.
	We want to output a final prediction rule with prediction risk controlled with high probability
	over the training phase.

	We start with putting forward an apparently new rule , simpler than existing ones \cite{lecue2009aggregation,audibert2007progressivesupp}, for the full information setting which, like the empirical star \cite{audibert2007progressivesupp},
	outputs a convex combination of two experts. In contrast to the latter, our rule does not need any optimization over a union of segments. The underlying principle will guide us to
	construct a budget efficient expert selection rule in the sequel.
	
	Define $\hat{R}(F_i) := T^{-1} \sum_{t=1}^T l(F_{i,t},Y_t)$ the empirical loss of expert $i$, and
	$\hat{d}_{ij} := (T^{-1} \sum_{t=1}^T (F_{i,t}-F_{j,t})^2)^{\frac{1}{2}}$ the empirical $L_2$ distance between experts $i$ and $j$
	over $T$ rounds. Finally let $\alpha=\alpha(\delta) := (\log(4K\delta^{-1})/T)^{\frac{1}{2}}$, where $\delta\in (0,1)$ is a fixed
	confidence parameter. Define
	\begin{equation}
	\Delta_{ij} := \hat{R}(F_j) - \hat{R}(F_i) - 6\alpha  \max\set[1]{ L\hat{d}_{ij}, B\alpha }.
	\end{equation}
	The quantity $\Delta_{ij}$ can be interpreted as a test statistic: if $\Delta_{ij}>0$, then we have a guarantee that
	$R(F_j)>R(F_i)$, so that expert $j$ is sub-optimal; this guarantee holds for all $(i,j)$ uniformly with probability $(1-\delta)$.
	It therefore makes sense to reduce the set of candidates to
	\begin{equation} \label{eq:defs}
	S:=\set[2]{ j \in \intr{K}: \sup_{j \in \intr{K}} \Delta_{ij} \leq 0}.
	\end{equation}
	Our new full information setting rule is the following: 
	\begin{equation}
	\label{eq:fullinfrule}
	\text{ choose } \bar{k}\in S  \text{ arbitrarily ; \;\; pick } \bar{j} \in \argmax_{j \in S} \hat{d}_{\bar{k}j}; \;\; 
	\text{ predict } \wh{F} := \frac{1}{2}(F_{\bar{k}} + F_{\bar{j}}).
	\end{equation}
	In words, the above rule consists in eliminating all experts that are manifestly outperformed by another
	one, and, among the remaining experts, pick two that disagree as much as possible (in terms of empirical $L^2$
	distance~) and output their simple average for prediction.
	The next theorem establishes fast convergence rate for the excess risk of this rule:
	\begin{theorem}\label{thm:fullinf}
		If Assumption~\ref{assump} holds and $\delta\in(0,1)$ is fixed, then for the prediction rule $\wh{F}$ defined by~\eqref{eq:fullinfrule}, it holds
		with probability $1-3\delta$ over the training phase ($c$ is an absolute constant):
		\begin{equation*}
		R(\wh{F}) \le R^*+cB \frac{\log(4K\delta^{-1})}{T}.
		\end{equation*}
	\end{theorem}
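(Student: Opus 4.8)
The plan is to combine two ingredients: the algebraic gain produced by strong convexity when we average two experts, and Bernstein-type deviation inequalities showing that the test statistic $\Delta_{ij}$ behaves as advertised. The starting point is the midpoint inequality: conditioning on the training phase (so $\bar k,\bar j$ are fixed) and evaluating on a fresh round, $\rho$-strong convexity of $l(\cdot,y)$ (i.e. convexity of $x\mapsto l(x,y)-\tfrac{\rho^2}{2}\|x\|^2$) gives, after taking expectation,
\[
R(\wh F)\le \tfrac12 R(F_{\bar k})+\tfrac12 R(F_{\bar j})-\tfrac{\rho^2}{8}\,d_{\bar k\bar j}^2,\qquad d_{\bar k\bar j}^2:=\e{\norm{F_{\bar k}-F_{\bar j}}^2}.
\]
The whole proof then amounts to showing the two survivor risks on the right are within $O(\alpha L d_{\bar k\bar j}+B\alpha^2)$ of $R^*$, so that the negative quadratic term absorbs the slack.

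For the deviations I would set up events holding with probability $\ge 1-3\delta$. Applying Bernstein to $Z_t=l(F_{j,t},Y_t)-l(F_{i^*,t},Y_t)$, whose variance is $\le L^2 d_{i^*j}^2$ by the Lipschitz property and whose range is $\le B$, yields $\abs{(\hat R(F_j)-\hat R(F_{i^*}))-(R(F_j)-R(F_{i^*}))}\lesssim \alpha\max\set{Ld_{i^*j},B\alpha}$; a second Bernstein applied to $\norm{F_{i,t}-F_{j,t}}^2$ (range $\le D^2$ with $D=8L/\rho^2$, variance $\le D^2 d_{ij}^2$) gives $\abs{\hat d_{ij}-d_{ij}}\le D\alpha$. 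The crucial arithmetic fact is $LD=B$ and $L^2/\rho^2=B/8$, which makes empirical and true distances interchangeable inside $\max\set{L\,\cdot\,,B\alpha}$ and aligns all constants. These hold at the scale $\alpha^2=\log(4K\delta^{-1})/T$ after union bounds. The constant $6$ in $\Delta_{ij}$ is precisely chosen to dominate the deviation constant, which delivers the two facts I need: (i) $i^*\in S$, because $R(F_{i^*})\le R(F_i)$ forces $\Delta_{i,i^*}\le 0$ for every $i$; and (ii) every survivor $j\in S$ obeys, via $\Delta_{i^*j}\le 0$ and the deviation bound, $R(F_j)-R^*\lesssim \alpha\max\set{Ld_{i^*j},B\alpha}$.

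The geometric heart is the selection step. Since $i^*\in S$ and $\bar j$ maximizes $\hat d_{\bar k\,\cdot}$ over $S$, we get the deterministic inequality $\hat d_{\bar k\bar j}\ge \hat d_{\bar k i^*}$; combining this with the distance concentration and the triangle inequality yields $d_{\bar k\bar j}\ge d_{i^*\bar k}-2D\alpha$ and $d_{i^*\bar j}\le 2 d_{\bar k\bar j}+2D\alpha$. Feeding these into (ii) and using $LD=B$, both survivor excess risks are bounded by $C\alpha L d_{\bar k\bar j}+C'B\alpha^2$. Inserting into the midpoint inequality gives
\[
R(\wh F)-R^*\le C\alpha L\, d_{\bar k\bar j}-\tfrac{\rho^2}{8}\,d_{\bar k\bar j}^2+C'B\alpha^2,
\]
and maximizing the concave quadratic in $d_{\bar k\bar j}$ (AM--GM) turns the first two terms into $\tfrac{2C^2\alpha^2L^2}{\rho^2}=\tfrac{C^2}{4}B\alpha^2$ by $L^2/\rho^2=B/8$, so $R(\wh F)\le R^*+cB\alpha^2=R^*+cB\log(4K\delta^{-1})/T$.

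I expect the main obstacle to be the step above: arguing that the single quadratic gain $d_{\bar k\bar j}^2$ is large enough to pay simultaneously for the excess risk of both $\bar k$ and $\bar j$. This is exactly where the ``pick the two most disagreeing survivors'' rule earns its keep, and it rests on $i^*\in S$ together with controlling the empirical-versus-true $L_2$ distance for the \emph{data-dependent} selected pair $(\bar k,\bar j)$; the union-bound bookkeeping for that distance (as opposed to the risk comparisons, which only involve $i^*$) is the one place where care is needed to keep the confidence scale at $\log(4K\delta^{-1})$.
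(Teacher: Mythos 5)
Your proposal is correct and follows essentially the same route as the paper's proof: the strong-convexity midpoint bound, (empirical) Bernstein control of $\Delta_{ij}$ versus $R_j-R_i$ at scale $\alpha\max\{Ld_{ij},B\alpha\}$, the facts that $i^*\in S$ and that every survivor's excess risk is controlled by its distance to $i^*$, the triangle inequality exploiting that $\bar j$ maximizes $\hat d_{\bar k\cdot}$ over $S\ni i^*$, and the final maximization of the concave quadratic in $d_{\bar k\bar j}$ using $B=8L^2/\rho^2$. The only cosmetic difference is that you invoke plain Bernstein with the true variance proxy $L^2d_{ij}^2$ plus a separate $\hat d$ versus $d$ comparison, while the paper uses the empirical Bernstein inequality directly; your extra care with the empirical-versus-true distance for the data-dependent pair $(\bar k,\bar j)$ is exactly the point the paper handles via its distance concentration bound.
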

	\begin{proof}
		Let $d^2_{ij} = \e{(F_i-F_j)^2}$. The result hinges on the following
		high confidence control of risk differences, established
		in Corollary~\ref{cor:deltacontrol} as a direct consequence of the
		empirical Bernstein's inequality:
		with probability at least $1 - 3\delta$, it holds:
		\begin{align}
		\label{eq:deltacontrol}
		\text{For all } i, j \in \intr{K}: \qquad 
		\Delta_{ij} & \leq (R_j - R_i) \leq \Delta_{ij} + 32\alpha \max\paren{ L d_{ij}, B \alpha}.          
		\end{align}
		Let $i^* \in \argmin_{i \in \intr{K}} R_i$ be an optimal expert. Since $R_{i^*}-R_j\leq 0$
		for all $j\in \intr{K}$,
		it follows that if~\eqref{eq:deltacontrol} holds, then $i^* \in S$, from
		the definition of $S$.
		So if~\eqref{eq:deltacontrol} holds, we have
		\begin{align*}
		R\paren{\frac{F_{\bar{k}}+F_{\bar{j}}}{2}}
		& \leq \frac{1}{2}\paren[1]{R_{\bar{k}} + R_{\bar{j}}} - \frac{\rho^2}{8} d_{\bar{k}\bar{j}}^2\\
		& = R^* + \frac{1}{2}\paren{(R_{\bar{k}} - R_{i^*}) + (R_{\bar{j}} - R_{i^*}) }- \frac{\rho^2}{8} d_{\bar{k}\bar{j}}^2\\
		& \leq R^* + \frac{1}{2}\paren{\Delta_{\bar{k}i^*} + \Delta_{\bar{j}i^*} } +
		16\alpha \paren{\max\paren{ L d_{\bar{j}i^*}, {B} \alpha} +
			\max\paren{  L d_{\bar{k}i^*}, B \alpha}}
		- \frac{\rho^2}{8} d_{\bar{k}\bar{j}}^2\\
		& \leq R^* + 32 B \alpha^2 
		+ 48 L \alpha d_{\bar{k}\bar{j}}
		- \frac{\rho^2}{8} d_{\bar{k}\bar{j}}^2;
		\end{align*}
		where we have used strong convexity of the loss (and therefore of $R(.)$ with respect to the
		$L^2$ distance) in the first line;
		the right-hand side of~\eqref{eq:deltacontrol} in the third line; and, in the last line,
		the fact that $\bar{j},\bar{k},i^*$ are all in $S$ along with
		$d_{\bar{j}i^*} \leq d_{\bar{j}\bar{k}} + d_{\bar{k}i^*}\leq 2 d_{\bar{j}\bar{k}}$ by
		construction of $\bar{j}$. Finally upper bounding the value of the last bound by its
		maximum possible value as a function of $d_{\bar{k}\bar{j}}$ and recalling $B=8L^2/\rho^2$, we obtain the statement.
	\end{proof} 
	

	\section{Budgeted Setting}\label{sec:bud}
	
	In this section, we consider the budgeted setting. More precisely, given an a-priori defined budget $C$, at each round the decision-maker selects an arbitrary subset of experts and asks for their predictions.
The choice of these experts may of course depend on past observations available to the agent.
        The player then pays a unit for each observed expert's advice. The game finishes when the budget is exhausted, at which point the player outputs a convex combination of experts for prediction.
	
	We convert the batch rule defined in the full information setting to an "online" rule by performing the test $\Delta_{ji} >0$ for each pair $(i,j)$ after each allocation. If at any round an expert $i \in \intr{K}$ fails any of these tests (i.e $\exists j: \Delta_{ji} >0$), it is no longer queried. This extension allows us to derive instance dependent bounds, which cover the rates obtained in the batch setting in the worst case.
	
	Since the tests $\Delta_{ij}>0$ are performed after each allocation, we introduce the following modification on the definition of $\Delta_{ij}$, for concentration inequalities to hold uniformly over the runtime of the procedure. We define $\Delta_{ij}(t, \delta)$ as follows:
	\begin{equation*}
	\Delta_{ij}(t, \delta) := \hat{R}(j, t) - \hat{R}(i, t) - 6 \alpha(t, \delta/(t(t+1))  \max\set[1]{L \hat{d}_{ij}(t), B\alpha(t, \delta/(t(t+1)) }.
	\end{equation*}     
	\begin{algorithm} 
		\caption{Budgeted aggregation \label{algo:budgeted}}
		\begin{algorithmic}
			\STATE \textbf{Input}  $ \delta$, $L$ and $\rho$.
			\STATE Initialization: $S \gets \intr{K}$.
			\FOR{ $T=1,2,\dots $ } 
			\STATE Jointly query all the experts in $S$ and update $\Delta_{ij}>0$ for all $i,j$.
			\STATE For all $i,j \in \intr{K}$, if $\Delta_{ij}>0$, eliminate $j$: $S \gets S \setminus \{j\}$.
			\IF{the budget is consumed }
			\STATE let $\bar{k} \in S$, and $\bar{l} \gets \underset{j \in S}{\text{argmax}}~\hat{d}_{\bar{k}j}$.
			\STATE Return $\frac{1}{2} \left(F_{\bar{k}} + F_{\bar{l}}\right)$.
			\ENDIF
			\ENDFOR 
			
		\end{algorithmic}
	\end{algorithm}
	
	Let $\mathcal{S}^{*} := \argmin_{i \in \intr{K}} R(F_i)$ denote the set of optimal experts. For $i,j \in \intr{K}$, we denote by $d_{ij}:= ( \mathbb{E}[\left(F_i - F_j\right)^2])^{1/2}$ the $L_2$ distance between the experts $F_i$ and $F_j$. For $i\in \intr{K}$, we introduce the following quantity:
	\begin{equation*}
	\Lambda_i := \min_{i^* \in \mathcal{S}^*}\max\left\lbrace \frac{L^2d^2_{ii^*}}{\left|R(F_i) - R(F_{i^*})\right|^2}; \frac{B}{R(F_i) - R(F_{i^*})} \right \rbrace.
	\end{equation*}
	
	Define the following set of experts:
	\begin{equation*}
	\mathcal{S}_{\epsilon} = \left\lbrace i \in \intr{K}: \Lambda_i > \frac{1}{\epsilon}\right \rbrace,
	\end{equation*}
	and let $\mathcal{S}_{\epsilon}^c$ be its complementary.

	\begin{theorem}\label{th:main3}(Instance dependent bound)
		Suppose Assumption~\ref{assump} holds. Let $C \ge K$ denote the global budget on queries and denote $\hat{g}$ the output of Algorithm~\ref{algo:budgeted} with inputs $(\delta, L, \rho)$ when the budget $C$ runs out.
		For any $\epsilon\ge  0$, if:
		\begin{equation*}
			C > 578 C_{\epsilon} \log\left(K\delta^{-1} C_{\epsilon}\right),
		\end{equation*}
		where
	\begin{equation*}
		C_{\epsilon} := \sum_{i \in \mathcal{S}_{\epsilon}^c} \Lambda_i + \left|\mathcal{S}_{\epsilon}\right|~\min \left\lbrace \frac{1}{\epsilon};  \Lambda^* \right \rbrace,
	\end{equation*}

	where $\Lambda^* := \max_{i:\Lambda_i<+\infty} \Lambda_i$, then, with probability at least $1-\delta$:
		\begin{equation*}
			R(\hat{g}) \le R^* +cB\epsilon,
		\end{equation*}
	where $c$ is an absolute constant. 									
	\end{theorem}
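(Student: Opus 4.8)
The plan is to condition on a single uniform-in-time ``good'' event and then split the analysis into a statistical part (when does a suboptimal expert get eliminated?) and a budget part (how many queries has that cost?). Write $\gamma_i := R(F_i)-R^*$ and, for suboptimal $i$, let $i^*$ denote the optimal expert achieving the minimum defining $\Lambda_i$, so that $\Lambda_i^{-1}=\min\{\gamma_i^2/(L^2 d_{ii^*}^2),\,\gamma_i/B\}$. First I would upgrade Corollary~\ref{cor:deltacontrol} to a statement holding simultaneously at all rounds: since $\Delta_{ij}(t,\delta)$ is built with confidence $\delta/(t(t+1))$ and $\sum_{t\ge1}(t(t+1))^{-1}=1$, a union bound over $t$ shows that with probability at least $1-3\delta$ the two-sided control~\eqref{eq:deltacontrol} holds for every $t$ and every pair $(i,j)$, with $\alpha$ replaced by $\alpha_t:=\alpha(t,\delta/(t(t+1)))$. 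I would work on this event $\mathcal{E}$ throughout. Its left inequality shows that no optimal expert is ever eliminated, so $\mathcal{S}^*\subseteq S$ at every round; as $\Lambda_{i^*}=+\infty$ this also gives $\mathcal{S}^*\subseteq\mathcal{S}_\epsilon$.

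Next I would bound the elimination round of a suboptimal expert. On $\mathcal{E}$ the right inequality of~\eqref{eq:deltacontrol} gives $\Delta_{i^*i}(t,\delta)\ge \gamma_i-32\,\alpha_t\max\{L d_{i^*i},B\alpha_t\}$, so the test $\Delta_{i^*i}>0$ fires — and $i$ is discarded — as soon as $\alpha_t^2<c_0\,\Lambda_i^{-1}$ for a numerical constant $c_0$. Since $\alpha_t^2=t^{-1}\log(4Kt(t+1)/\delta)$, resolving this self-referential inequality yields that $i$ leaves by a round $t_i\le c_1\,\Lambda_i\log(K\delta^{-1}\Lambda_i)$; experts with $\Lambda_i=+\infty$ carry $t_i=+\infty$, consistently with $\mathcal{S}^*\subseteq\mathcal{S}_\epsilon$. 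Tracking $c_0,c_1$ carefully is what eventually produces the announced constant.

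The budget accounting is the heart of the argument. Since Algorithm~\ref{algo:budgeted} queries the whole current active set each round, the number of queries spent through round $T'$ is exactly $\sum_{i\in\intr{K}}\min\{T',t_i\}$. Let $T^\star:=\max_{i\in\mathcal{S}_\epsilon^c}t_i$ be the first round by which every ``easy'' suboptimal expert has been removed. Splitting the count at $T'=T^\star$: each $i\in\mathcal{S}_\epsilon^c$ contributes $t_i\le c_1\Lambda_i\log(K\delta^{-1}\Lambda_i)$, summing to $\lesssim(\sum_{i\in\mathcal{S}_\epsilon^c}\Lambda_i)\log(\cdots)$; each $i\in\mathcal{S}_\epsilon$ contributes $\min\{T^\star,t_i\}\le T^\star$, and since $\max_{i\in\mathcal{S}_\epsilon^c}\Lambda_i\le\min\{1/\epsilon,\Lambda^*\}$ we get $T^\star\le c_1\min\{1/\epsilon,\Lambda^*\}\log(\cdots)$, summing to $\lesssim|\mathcal{S}_\epsilon|\min\{1/\epsilon,\Lambda^*\}\log(\cdots)$. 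Adding the two pieces bounds the query mass needed to reach $T^\star$ by $578\,C_\epsilon\log(K\delta^{-1}C_\epsilon)$. Hence the hypothesis on $C$ forces the algorithm to survive at least until $T^\star$, so when the budget finally runs out the active set satisfies $S\subseteq\mathcal{S}_\epsilon$ (eliminations being irreversible).

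It remains to convert $S\subseteq\mathcal{S}_\epsilon$ into the risk bound, reusing the computation of Theorem~\ref{thm:fullinf} but now invoking membership in $\mathcal{S}_\epsilon$ rather than smallness of $\alpha_T$. Strong convexity of $R$ in $L^2$ gives
\[
R(\hat{g}) \le R^* + \tfrac12\big(\gamma_{\bar k}+\gamma_{\bar l}\big) - \tfrac{\rho^2}{8}\,d_{\bar k\bar l}^2 .
\]
For $i\in\{\bar k,\bar l\}\subseteq\mathcal{S}_\epsilon$ we have $\Lambda_i>1/\epsilon$, i.e. $\gamma_i<\max\{B\epsilon,\ L d_{ii^*}\sqrt{\epsilon}\}$; bounding $d_{ii^*}\le 2d_{\bar k\bar l}$ via the maximality of $\bar l$ and the fact that $i^*,\bar k,\bar l\in S$ (exactly as in Theorem~\ref{thm:fullinf}), the term linear in $d_{\bar k\bar l}$ is absorbed by $-\tfrac{\rho^2}{8}d_{\bar k\bar l}^2$ after maximizing over $d_{\bar k\bar l}$; with $B=8L^2/\rho^2$ this yields $R(\hat{g})\le R^*+cB\epsilon$. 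I expect the budget bookkeeping of the third step — simultaneously controlling $T^\star$ and each contribution $\min\{T^\star,t_i\}$, and folding the several logarithmic factors into the single $\log(K\delta^{-1}C_\epsilon)$ with the precise constant — to be the main obstacle, whereas the concentration and output steps are essentially transcriptions of the full-information proof.
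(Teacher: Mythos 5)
Your proposal is correct and follows the same architecture as the paper's proof: condition on a uniform-in-time concentration event, bound the round by which each suboptimal expert is eliminated in terms of $\Lambda_i$ (the paper's Claim~\ref{cl:1bis}, via Lemma~\ref{lem:1}), and then do the budget accounting to show that the hypothesis on $C$ guarantees survival past the round $\max_{i\in\mathcal{S}_\epsilon^c}T_i$, hence $S\subseteq\mathcal{S}_\epsilon$ at exhaustion. Your bookkeeping $\sum_i\min\{T',t_i\}$ split at $T^\star$ is the same computation as the paper's $C=C_1+C_2$ decomposition with $C_1\ge|\mathcal{S}_\epsilon|\max_{i\in\mathcal{S}_\epsilon^c}T_i$, and your resolution of the self-referential logarithm plays the role of the paper's Lemma~\ref{cl:calpure}. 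The one place you genuinely diverge is the last step: the paper converts $S\subseteq\mathcal{S}_\epsilon$ into a lower bound $T_{\bar k\bar l}\gtrsim\log(KC\delta^{-1})/\epsilon$ on the joint query count and then invokes the empirical bound of Lemma~\ref{lem:conc}, whereas you read off $\gamma_i<\max\{B\epsilon,\,Ld_{ii^*}\sqrt{\epsilon}\}$ directly from the population-level definition $\Lambda_i>1/\epsilon$ and rerun the strong-convexity computation of Theorem~\ref{thm:fullinf}. Your route is slightly more direct (it never needs the query-count lower bound), at the cost of relying on the step $d_{\bar k i^*}\le d_{\bar k\bar l}$ for \emph{true} distances while $\bar l$ maximizes the \emph{empirical} distance; this requires absorbing an additive $(B/L)\alpha_T$ term via the distance concentration inequalities, a gloss the paper's Theorem~\ref{thm:fullinf} also commits and which is harmless here since in the budgeted algorithm $T_{\bar k\bar l}$ equals the total number of rounds. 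Both endgames are valid; the substance of the theorem (the budget accounting) is identical in the two proofs.
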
 
	\begin{remark}
		Observe that the above result gives in particular a query
		budget bound for the problem of best expert identification
		in our setting, by taking $\epsilon=0$, in which case the required expert query budget
		is of order $\sum_{i: \Lambda_i < +\infty} \Lambda_i$ up to logarithmic terms.
		We can compare this to the problem of best arm identification in a bandit setting (one arm pull/query per round);
		our setting can be cast into that framework by considering each expert as an arm and only
		recording the information of the loss of the asked expert. The known optimal query bound for best arm identification in the classical multi-armed bandits setting with loss/reward bounded by $B$ is of order $\sum_{i: \Lambda_i < +\infty} \wt{\Lambda}_i$~\cite{kaufmann2016complexity}, where $\wt{\Lambda}_i = B^2(R(F_i) - R(F_{i^*}))^{-2}$.
		Since the diameter of $\mathcal{X}$ is bounded by $B/L$ (see Lemma~\ref{assump_cons}), it holds $\Lambda_i \leq \wt{\Lambda}_i$. Hence, for best expert identification, the bound of Theorem~\ref{th:main3}  improves upon the best arm identification bound, potentially by a significant margin
		(in particular concerning the contribution of  suboptimal but close to optimal experts for which $d_{ii^*} \ll B/L$ and $R_i - R_{i^*} \ll B$). Again, the improvement is due to the Assumption~\ref{assump} on the loss and the possibility
		to query several experts per round, which are not used when casting the problem as a classical bandit setting.
	\end{remark}
	
	\section{Two queries per round ($m =p = 2$)}\label{sec:lim}
	
	In this section, we suppose that the decision-maker is constrained to see only two experts' advice per round ($m=2$). We suppose that the horizon is unknown; when the game is halted, the player outputs a convex combination of at most two experts ($p=2$). We will show that the rates obtained are as good as in the full information case in its dependence on the number of rounds $T$.
	
	Algorithm~\ref{algo:bud} works as follows. 
	To circumvent the limitation of observing only two experts per round, in each round, we sample a pair $(i,j) \in S \times S$ in a uniform way, where $S$ is the set of non-eliminated experts. Then the tests $\Delta'_{ji} \le 0$ and $\Delta'_{ij} \le 0$ are performed, where $\Delta'_{ij}$ is defined by \eqref{eq:def_delta}. If $i$ or $j$ fail the test, which means that it is a suboptimal expert, it is eliminated from $S$.
	
	Finally, when the algorithm is halted,  depending on the number of allocated samples, we choose either an empirical risk minimizer over the non-eliminated experts or the mean of two experts from $S$ that are distant enough. This rule allows our algorithm's output to enjoy the best of converge rates of the two methods.
	
	We introduce the following notations: In round $t$, denote $T_{ij}(t)$ the number of samples where predictions of experts $i$ and $j$ were jointly queried and $T_i(t)$ the number of rounds where the prediction of expert $i$ was queried. Denote $\hat{R}_{ij}(j, t)$ the empirical loss of expert $i$ calculated using only the $T_{ij}(t)$ samples queried for $(i,j)$ jointly. We define $\alpha_{ij}(t, \delta) := \sqrt{\frac{\log(4K\delta^{-1})}{T_{ij}(t)}}$ if $T_{ij}(t)>0$ and $\alpha_{ij}(t) = \infty$ otherwise. Let $\hat{d}_{ij}(t)$ be the empirical $L_2$ distance between experts $i$ and $j$ based on the $T_{ij}(t)$ queried samples.
	Denote $\delta_t := \delta/(t(t+1))$. For $i,j \in \intr{K}$ we define:
	\begin{equation}\label{eq:def_delta}
	\Delta'_{ij}(t, \delta) := \hat{R}_{ij}(j, t) - \hat{R}_{ij}(i, t) - 6 \max\left \lbrace L \alpha_{ij}(t, \delta_t) \hat{d}_{ij}(t), B\alpha^2_{ij}(t, \delta_t) \right \rbrace.
	\end{equation} 	
	
	\begin{algorithm} 
		\caption{Two-point feedback \label{algo:bud}}
		\begin{algorithmic}
			\STATE \textbf{Input}  $\delta, L$ and $\rho$.
			\STATE Initialization: $S \gets \intr{K}$. 
			\FOR{ $T=1,2,\dots $ } 
			\STATE Let $(i,j) \in \argmin_{(u,v) \in S \times S} T_{uv}$.
			\STATE Query the advice of experts $i$ and $j$ and update the corresponding quantities.
			\STATE For all $u,v$: If $\Delta'_{uv}>0$: $S \gets S \setminus \{v\}$.
			\ENDFOR  
			\STATE \textbf{On interrupt:} Let $\hat{k} \in S$ and let $\hat{l} \gets \underset{j \in S}{\text{argmax}}~\hat{d}_{\hat{k}j}$.
			\STATE Let $\hat{q}$ denote the empirical risk minimizer on $S $.
			\IF{$T_{\hat{k}\hat{l}} > \sqrt{\log(KT\delta^{-1}) T_{\hat{q}}}$}
			\STATE Return $\frac{1}{2} \left(F_{\hat{k}} + F_{\hat{l}}\right)$.
			\ELSE 
			\STATE Return $F_{\hat{q}}$.
			\ENDIF
		\end{algorithmic}
	\end{algorithm}

	Our first result in this setting is an empirical bound. 
	At any interruption time, it gives a bound on the excess risk, only depending on
	quantities available to the user, using the number of queries resulting from the querying strategy in Algorithm~\ref{algo:bud}. We then use a worst-case bound on these quantities to develop an instance independent bound in Corollary~\ref{th:main1}. 
	\begin{theorem}\label{th:main0}(Empirical bound)
		Suppose Assumption~\ref{assump} holds. Let $T \ge 2K^2$, and denote $\hat{g}$ the output of Algorithm~\ref{algo:bud} with inputs $(\delta, L, \rho)$ in round $T$. Then with probability at least $1-3\delta$:
		\begin{equation} \label{eq:emp}
		R\left(\hat{g}\right) \le R^* + c~B  \min \left \lbrace\frac{\log\left(TK\delta^{-1}\right)}{T_{\hat{k}\hat{l}}(T)}, \sqrt{\frac{\log\left(TK\delta^{-1}\right)}{T_{\hat{q}}(T)}} \right \rbrace,
		\end{equation}
		where $\hat{k}, \hat{l}$ and $\hat{q}$ are the experts in Algorithm~\ref{algo:bud} and $c$ is an absolute constant. 
	\end{theorem}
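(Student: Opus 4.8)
The plan is to follow the template of Theorem~\ref{thm:fullinf}, but with pair-dependent confidence radii, and to reduce the two output branches of Algorithm~\ref{algo:bud} to, respectively, a fast-rate averaging bound and a slow-rate empirical-risk bound. Write $R_i := R(F_i)$, $\ell := \log(KT\delta^{-1})$, and $\alpha_{ij} := \alpha_{ij}(T,\delta_T)$. First I would set up the good event $\mathcal{E}$: applying the empirical Bernstein inequality to each pair at each round, and union-bounding over rounds using $\sum_{t\ge 1}\delta/(t(t+1))=\delta$ and over the $O(K^2)$ pairs (the uniform-in-time analogue of the control of Corollary~\ref{cor:deltacontrol}), one gets, with probability at least $1-3\delta$, for every round $t$ and pair $(i,j)$ with $T_{ij}(t)>0$, the two-sided control $\Delta'_{ij}(t,\delta)\le R_j-R_i\le \Delta'_{ij}(t,\delta)+32\max\{L\alpha_{ij}(t,\delta_t)d_{ij},\,B\alpha_{ij}^2(t,\delta_t)\}$, together with a comparison between empirical and population $L^2$ distances. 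On $\mathcal{E}$, the left inequality with $j=i^\ast\in\mathcal{S}^\ast$ gives $\Delta'_{ui^\ast}(t,\delta)\le R_{i^\ast}-R_u\le 0$ for all $u,t$, so no optimal expert is ever eliminated and $\mathcal{S}^\ast\subseteq S$ at all times. We may also assume the relevant radii are $<1$, since otherwise the claimed bound exceeds $B$ and holds trivially (the excess risk is at most $B$).

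The key combinatorial ingredient, and the step I expect to be the main obstacle, is to control the realized sample counts. I would establish the following invariant of the min-count selection rule that queries $(i,j)\in\argmin_{(u,v)\in S\times S}T_{uv}$: at the start of every round, any two pairs that are currently eligible (both endpoints in the current $S$) have counts differing by at most one. Since an expert never re-enters $S$ after elimination, a pair is eligible throughout $[1,T]$ iff both endpoints lie in the final $S$; hence for $a,b\in S$ and any third expert $c$, the pairs $(a,c)$ and $(b,c)$ are eligible on exactly the same rounds, so $|T_{ac}(T)-T_{bc}(T)|\le 1$, and summing over $c\neq a,b$ gives $|T_a(T)-T_b(T)|\le K$. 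In particular $T_{i^\ast\hat k},T_{i^\ast\hat l},T_{\hat k\hat l}$ are pairwise within one, and $|T_{i^\ast}-T_{\hat q}|\le K$. Because $T\ge 2K^2$ forces every surviving pair count to be $\Omega(T/K^2)$ and every surviving expert count to be $\Omega(T/K)=\Omega(K)$, these additive gaps are absorbed into constant multiplicative factors; thus $\alpha_{i^\ast\hat k},\alpha_{i^\ast\hat l}\le c\,\alpha_{\hat k\hat l}$ and $T_{i^\ast}\ge \tfrac12 T_{\hat q}$.

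For the averaging branch I would repeat the computation of Theorem~\ref{thm:fullinf} with $\alpha$ replaced by the pair radii. Strong convexity gives $R(\hat g)\le \tfrac12(R_{\hat k}+R_{\hat l})-\tfrac{\rho^2}{8}d_{\hat k\hat l}^2$; the tests $\Delta'_{i^\ast\hat k}\le 0$ and $\Delta'_{i^\ast\hat l}\le 0$ (valid since $\hat k,\hat l\in S$) with the right inequality of $\mathcal{E}$ bound $R_{\hat k}-R^\ast$ and $R_{\hat l}-R^\ast$ by $32\max\{L\alpha_{i^\ast\hat k}d_{\hat k i^\ast},B\alpha_{i^\ast\hat k}^2\}$ and its analogue. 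The choice $\hat l\in\argmax_{j\in S}\hat d_{\hat k j}$ yields $\hat d_{\hat k i^\ast}\le \hat d_{\hat k\hat l}$, which through the distance comparison on $\mathcal{E}$ gives $d_{\hat k i^\ast}\le c\,d_{\hat k\hat l}$ and then $d_{\hat l i^\ast}\le d_{\hat k\hat l}+d_{\hat k i^\ast}\le c\,d_{\hat k\hat l}$; combined with $\alpha_{i^\ast\hat k},\alpha_{i^\ast\hat l}\le c\,\alpha_{\hat k\hat l}$, everything is expressed through $d_{\hat k\hat l}$ and $\alpha_{\hat k\hat l}$. Maximizing $R^\ast+c(B\alpha_{\hat k\hat l}^2+L\alpha_{\hat k\hat l}d_{\hat k\hat l})-\tfrac{\rho^2}{8}d_{\hat k\hat l}^2$ over $d_{\hat k\hat l}$ and using $B=8L^2/\rho^2$ produces $R(\hat g)\le R^\ast+cB\alpha_{\hat k\hat l}^2=R^\ast+cB\,\ell/T_{\hat k\hat l}$.

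For the empirical-risk branch, $\hat q$ minimizes $\hat R(\cdot)$ over $S\ni i^\ast$, so the chain $R_{\hat q}-R^\ast\le (R_{\hat q}-\hat R(\hat q))+(\hat R(\hat q)-\hat R(i^\ast))+(\hat R(i^\ast)-R^\ast)$ has nonpositive middle term and is controlled by the single-expert deviations $cB(\sqrt{\ell/T_{\hat q}}+\sqrt{\ell/T_{i^\ast}})$; the count comparison $T_{i^\ast}\ge \tfrac12 T_{\hat q}$ then gives $R(\hat g)-R^\ast\le cB\sqrt{\ell/T_{\hat q}}$. Finally I would match the two cases to the stated minimum: the branch test returns the average exactly when $T_{\hat k\hat l}>\sqrt{\ell\,T_{\hat q}}$, i.e. exactly when $\ell/T_{\hat k\hat l}\le \sqrt{\ell/T_{\hat q}}$, so in the averaging branch the proven bound $cB\,\ell/T_{\hat k\hat l}$ equals $cB\min\{\ell/T_{\hat k\hat l},\sqrt{\ell/T_{\hat q}}\}$, and symmetrically in the ERM branch the bound $cB\sqrt{\ell/T_{\hat q}}$ equals the same minimum, yielding~\eqref{eq:emp}. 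The whole argument hinges on the counting invariant above, since without it the radii $\alpha_{i^\ast\hat k},\alpha_{i^\ast\hat l}$ and the count $T_{i^\ast}$ cannot be tied to the quantities $T_{\hat k\hat l},T_{\hat q}$ that appear in the bound.
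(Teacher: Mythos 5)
Your proposal is correct and follows essentially the same route as the paper's proof: the same uniform-in-time empirical-Bernstein event, the same non-elimination of $\mathcal{S}^*$ (Lemma~\ref{lem:i*}), the same strong-convexity parabola argument for the averaging branch (Lemma~\ref{lem:conc}), the same three-term Hoeffding decomposition for the ERM branch, and the same observation that the branch test makes each bound coincide with the stated minimum. If anything you are more careful than the paper, which merely asserts the count comparisons $|T_{i^*}(T)-T_{\hat q}(T)|\le K$ and $\alpha_{ii^*}\approx\alpha_{ij}$ where you explicitly prove the within-one invariant for eligible pairs under the min-count selection rule.
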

	\paragraph{Proof Sketch of Theorem~\ref{th:main0}}
	
	We start by noting that when running Algorithm~\ref{algo:bud}, the optimal experts $\mathcal{S}^{*} = \argmin_{i \in \intr{K}} R(F_i)$ are never eliminated with high probability (Lemma~\ref{lem:i*}). This shows in particular, that when the procedure is terminated, we have $\mathcal{S}^{*} \subseteq S_T$,
        where $S_T$ is the set of non-eliminated experts at round $T$.
	
	Then we show the following key result: in each round $t \le T$, for any expert $i\in S_t$, let $j\in \argmax_{l \in S_t} \hat{d}_{il}(t)$, we have with probability at least $1-\delta$:
	\begin{equation*}
	R\left(\frac{F_i + F_j}{2}\right) \le R^* + cB~\frac{\log(K\delta_t^{-1})}{T_{ij}(t)}.
	\end{equation*}
	For the second bound, recall that $i^*$ belongs to $S_T$ with high probability. Therefore, performing an empirical risk minimization over the set of non-eliminated experts leads to the bound $\sqrt{\frac{\log(KT\delta^{-1})}{T_q(T)}}$, through a simple concentration argument using Hoeffding's inequality.

	\begin{corollary}\label{th:main1}(Instance independent bound)
		Suppose assumption 1 holds. Let $T \ge 2K^2$, and denote $\hat{g}$ the output of Algorithm~\ref{algo:bud} with inputs $(\delta, L, \rho)$ in round $T$. Then with probability at least $1-3\delta$:
		\begin{equation*}
		R\left(\hat{g}\right) \le R^* + c~B  \min \left \lbrace\frac{K^2\log\left(TK\delta^{-1}\right)}{T}, \sqrt{\frac{K\log\left(TK\delta^{-1}\right)}{T}} \right \rbrace,
		\end{equation*}
		where $c$ is an absolute constant. 
	\end{corollary}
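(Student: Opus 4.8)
The plan is to derive the corollary from the empirical bound of Theorem~\ref{th:main0} by replacing the data-dependent counts $T_{\hat k\hat l}(T)$ and $T_{\hat q}(T)$ with deterministic worst-case lower bounds that hold for \emph{every} run of Algorithm~\ref{algo:bud}. Concretely, on the probability-$(1-3\delta)$ event on which~\eqref{eq:emp} holds, it suffices to establish the two purely combinatorial inequalities
\begin{equation*}
T_{\hat k\hat l}(T) \ge \frac{T}{K^2}, \qquad T_{\hat q}(T) \ge \frac{T}{K},
\end{equation*}
and then substitute them into the two terms of the minimum in~\eqref{eq:emp}; the hypothesis $T \ge 2K^2$ is exactly what is needed to absorb the lower-order corrections that appear below.

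Everything rests on one invariant of the ``query the least-sampled pair'' rule. At each round the algorithm increments the count of a pair attaining $\min_{(u,v)\in S\times S} T_{uv}$, and the candidate set $S$ only shrinks (as long as $|S_t|\ge 2$, which holds throughout since a pair must be queried each round). I would prove by induction on the rounds that, at every round $t$, all currently available pairs (those in $S_t\times S_t$) have counts lying within $1$ of each other: incrementing a minimal pair preserves this, and deleting the pairs containing a newly eliminated expert leaves a subset of an interval of width one. Two consequences follow: the minimal available count $\mu(t) := \min_{(u,v)\in S_t\times S_t} T_{uv}(t)$ is nondecreasing in $t$, and any pair ever queried has final count at most $\mu(T)+1$ (its count is frozen at its elimination time $\tau$, where it was within $1$ of the then-current, hence smaller, minimum $\mu(\tau)$).

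For the first inequality, since at most $\binom{K}{2}$ pairs are ever queried and each has count at most $\mu(T)+1$, summing gives $T \le \binom{K}{2}(\mu(T)+1)$, whence $\mu(T) \ge 2T/(K(K-1)) - 1 \ge T/K^2$ using $T\ge 2K^2$. Because Lemma~\ref{lem:i*} guarantees the surviving set contains an optimal expert and $\hat k,\hat l\in S_T$, the pair $\{\hat k,\hat l\}$ is available throughout, so $T_{\hat k\hat l}(T)\ge \mu(T)\ge T/K^2$, which controls the first term of~\eqref{eq:emp}.

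The crux is the second inequality: I must show the ERM survivor $\hat q$ collects an $\Omega(1/K)$ fraction of all $T$ queries, despite eliminated pairs that may individually carry very few queries. For each $j\ne \hat q$ the pair $\{\hat q,j\}$ stays available until the round $\tau_{\hat q j}=t_j$ at which $j$ is eliminated (or until $T$), so by the invariant $T_{\hat q j}(T)\ge \mu(\tau_{\hat q j})$. Since $\hat q$ survives, for any pair $\{u,v\}$ with $u,v\ne\hat q$ one has $\tau_{uv}=\min(t_u,t_v)\le \tau_{\hat q u}$, whence $\mu(\tau_{uv})\le \min\big(\mu(\tau_{\hat q u}),\mu(\tau_{\hat q v})\big)$ by monotonicity of $\mu$. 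Bounding this minimum by the average and counting incidences yields $\sum_{u\ne \hat q}\mu(\tau_{\hat q u}) \ge \tfrac{2}{K}\sum_{\text{pairs}}\mu(\tau)$; combined with the frozen-count bound $n_{uv}\le \mu(\tau_{uv})+1$ and $\sum_{\text{pairs}} n_{uv}=T$, which give $\sum_{\text{pairs}}\mu(\tau)\ge T-\binom{K}{2}$, this produces $T_{\hat q}(T)\ge \tfrac{2}{K}\big(T-\binom{K}{2}\big) \ge T/K$ under $T\ge 2K^2$. The main obstacle is precisely this step: it is tempting but \emph{false} that each surviving pair alone carries enough queries uniformly to conclude, and the averaging argument exploiting that $\hat q$'s pairs outlive all others is what makes the factor $2/K$ emerge. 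Substituting both bounds into~\eqref{eq:emp} and relabelling the absolute constant completes the proof.
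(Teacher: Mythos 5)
Your proof is correct and follows the same route as the paper: both deduce the corollary by substituting deterministic worst-case lower bounds on $T_{\hat{k}\hat{l}}(T)$ and $T_{\hat{q}}(T)$ into the empirical bound \eqref{eq:emp} of Theorem~\ref{th:main0}. The only difference is that the paper simply asserts $T_{\hat{k}\hat{l}}(T)\ge T/K^2$ and $T_{\hat{q}}(T)\ge T/(2K)$, whereas you actually prove them (indeed with the slightly sharper $T/K$ for the second) via the balancedness invariant of the least-sampled-pair rule and the averaging argument over surviving pairs, which is a welcome completion of a step the paper leaves implicit.
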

	\begin{proof}
		We develop an elementary bound on $T_{\hat{k}\hat{l}}$ and $T_{\hat{q}}$, then we inject these bounds into inequality \eqref{eq:emp}.
		
		Note that: $\hat{q}, i^* \in S_T$, hence $T_{\hat{q}}(T), T_{i^*}(T) \ge \frac{T}{2K}$. Moreover, we have:
		\begin{equation*}
			T_{\hat{k}\hat{l}}(T) \ge \frac{T}{K^2}.
		\end{equation*}
		Using inequality \eqref{eq:emp}, we obtain the result.
	\end{proof}
	
	\begin{remark}
          Observe that in all the considered settings (full information, budgeted and limited advice), the number of jointly sampled pairs $(F_i, F_j)$ to attain an excess risk of $\mathcal{O}(\epsilon)$ is of the order of $\mathcal{O}(K^{2}/\epsilon)$. Being able to ask a set of $m$ experts simultaneously in a training round allows to sample $m(m-1)/2$ pairs for a query cost of $m$: this is the advantage of the budgeted setting, while we have to query each pair in succession under the strict $m=2$ constraint, resulting in
          a higher cost overall.          
	\end{remark}
	
	
	\begin{theorem}\label{th:main2}(Instance dependent bound)
		Suppose Assumption~\ref{assump} holds. Let $\hat{g}$ denote the output of Algorithm~\ref{algo:bud} with input $(\delta, L, \rho)$ and $T$ denote the total number of rounds. Let $\epsilon >0$, if :
		\begin{equation*}
		T \ge 578~C_{\epsilon}   \log\left(\delta^{-1} C_{\epsilon}\right),
		\end{equation*}
		where
		\begin{equation*}
			C_{\epsilon} := K\sum_{i \in \mathcal{S}_{\epsilon}^c}\Lambda_{i} + 2 \left|\mathcal{S}_{\epsilon}\right|^2 \min\left\lbrace\frac{1}{\epsilon}, \Lambda^* \right \rbrace,
		\end{equation*}
		where $\Lambda^* := \max_{i:\Lambda_i<+\infty} \Lambda_i$, then, with probability at least $1-\delta$:
		\begin{equation*}
		R(\hat{g})  \le R^* + cB~ \epsilon,
		\end{equation*}
		where $c$ is an absolute constant.
	\end{theorem}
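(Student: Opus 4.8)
The plan is to reduce everything to the empirical bound of Theorem~\ref{th:main0}: since \eqref{eq:emp} controls $R(\hat g)-R^*$ by $cB\min\{\log(TK\delta^{-1})/T_{\hat k\hat l}(T),\ \sqrt{\log(TK\delta^{-1})/T_{\hat q}(T)}\}$, it suffices to show that, under the stated lower bound on $T$, the pair count $T_{\hat k\hat l}(T)$ is large enough to make the first term $\le cB\epsilon$. I would argue the contrapositive: work on the event of probability $\ge 1-\delta$ on which the time-uniform two-sided control of the $\Delta'_{ij}(t,\delta)$ holds (the analogue of \eqref{eq:deltacontrol}, obtained from the empirical Bernstein inequality together with the union bound encoded in $\delta_t=\delta/(t(t+1))$) and on which no optimal expert is ever eliminated (Lemma~\ref{lem:i*}); then show that if the bound \eqref{eq:emp} still exceeds $cB\epsilon$ at round $T$, the number of elapsed rounds must be strictly smaller than $578\,C_\epsilon\log(\delta^{-1}C_\epsilon)$, a contradiction.

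The first ingredient is an elimination-time estimate. Using the two-sided control of $\Delta'_{i^*i}$ exactly as in the proof of Theorem~\ref{thm:fullinf}, the test $\Delta'_{i^*i}(t,\delta)>0$ fires, and hence $i$ is removed from $S$, as soon as the joint count satisfies $T_{ii^*}(t)\ge c'\Lambda_i\log(K\delta_t^{-1})$, where $i^*\in\mathcal{S}^*$ is the optimal expert attaining the minimum in the definition of $\Lambda_i$ (it is alive throughout by Lemma~\ref{lem:i*}). Consequently every $i\in\mathcal{S}_\epsilon^c$ (for which $\Lambda_i\le 1/\epsilon<+\infty$) is eliminated once its joint count with $i^*$ reaches order $\Lambda_i\log(K\delta_t^{-1})$, whereas every $i\in\mathcal{S}_\epsilon$ needs order $\Lambda_i>1/\epsilon$ joint samples and therefore survives as long as the common sampling level stays below order $\min\{1/\epsilon,\Lambda^*\}\log(K\delta_t^{-1})$.

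The core of the argument is then a counting identity exploiting the balancing rule $(i,j)\in\argmin_{(u,v)\in S\times S}T_{uv}$: because each sweep increments every currently active pair by one, the final count of a pair $(u,v)$ equals the common level reached when the first of $u,v$ is eliminated, so writing $\nu_i$ for the sampling level at which expert $i$ leaves $S$ (with $\nu_i$ the terminal level if $i$ survives) one has $T=\sum_{u<v}\min(\nu_u,\nu_v)$. In the assumed bad state the common level stays below order $\min\{1/\epsilon,\Lambda^*\}\log$, so I split the sum: pairs with at least one endpoint $u\in\mathcal{S}_\epsilon^c$ contribute at most $\min(\nu_u,\nu_v)\le\nu_u\le c'\Lambda_u\log$, and since each such $u$ meets at most $K$ partners this gives a total of order $K\log\sum_{u\in\mathcal{S}_\epsilon^c}\Lambda_u$; pairs with both endpoints in $\mathcal{S}_\epsilon$ number at most $\binom{|\mathcal{S}_\epsilon|}{2}$ and each contributes at most order $\min\{1/\epsilon,\Lambda^*\}\log$, giving order $|\mathcal{S}_\epsilon|^2\min\{1/\epsilon,\Lambda^*\}\log$. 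Summing these two matches $C_\epsilon\log$ up to an absolute constant, so in the bad state $T<578\,C_\epsilon\log(\delta^{-1}C_\epsilon)$, the desired contradiction; tracking constants and using $\log(TK\delta^{-1})\lesssim\log(\delta^{-1}C_\epsilon)$ (valid because the budget forces $T$ polynomial in $C_\epsilon$) yields the stated constant and the conclusion $R(\hat g)\le R^*+cB\epsilon$.

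I expect the main obstacle to be the counting step: the set $S$ shrinks over time, so the active pairs change and the clean identity $T=\sum_{u<v}\min(\nu_u,\nu_v)$ must be justified carefully from the $\argmin$ balancing rule, controlling the off-by-one discrepancies between pair counts and isolating the exact combinatorial factors $K$ and $|\mathcal{S}_\epsilon|^2$. This is precisely where the bound differs from the budgeted Theorem~\ref{th:main3}, whose simultaneous querying of all of $S$ increments every pair for a cost of $|S|$ and thereby removes the extra $K$ and $|\mathcal{S}_\epsilon|$ factors. A secondary, more routine difficulty is resolving the self-referential logarithm and verifying that surviving $\mathcal{S}_\epsilon$ experts are never eliminated before the terminal level, so that their pairwise counts genuinely reach order $\min\{1/\epsilon,\Lambda^*\}\log$.
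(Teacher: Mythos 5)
Your proposal is correct and rests on the same pillars as the paper's proof: the elimination thresholds of Lemma~\ref{lem:1}, the survival of optimal experts (Lemma~\ref{lem:i*}), the round-robin allocation over active pairs, and the conversion of a large $T_{\hat{k}\hat{l}}$ into an excess-risk bound via Lemma~\ref{lem:conc}. The only genuine difference is the bookkeeping of the counting step. The paper runs a forward induction over phases: it orders the complexities $\Lambda_{(1)}\le\dots\le\Lambda_{(K)}$, defines explicit phase boundaries $\tau_N=\sum_{k\le N}289(K-k)^2\paren[0]{\Lambda_{(k)}-\Lambda_{(k-1)}}\log(\delta^{-1}C_\epsilon)$, and shows by induction (Claim~\ref{cl:elim}) that every expert with $\Lambda_i\le\Lambda_{(N)}$ is gone by round $\ceil{\tau_N}$, lower-bounding $T_{ji^*}(\tau_{N+1})$ as a telescoping sum of (phase length)/(number of active pairs). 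You instead argue by contradiction through the global identity $T=\sum_{u<v}\min(\nu_u,\nu_v)$ and charge each pair to the elimination level of its earlier-dying endpoint; splitting pairs by whether they touch $\mathcal{S}_\epsilon^c$ recovers exactly the two terms of $C_\epsilon$. The two accountings are dual and yield the same bound; yours is more transparent about where the factors $K$ and $\abs[0]{\mathcal{S}_\epsilon}^2$ come from, while the paper's explicit $\tau_N$ makes the induction and the off-by-one issues easier to control. The two caveats you flag are real but are handled by exactly the tools you anticipate (the uniform-allocation lower bound inside Claim~\ref{cl:elim}, and Lemma~\ref{cl:calpure} for the self-referential logarithm). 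One point to make explicit rather than implicit: the $\min\set[0]{1/\epsilon,\Lambda^*}$ in $C_\epsilon$ requires the case split the paper performs at the end --- if $1/\epsilon\ge\Lambda^*$ the terminal common level need not be controlled by $\log(\cdot)/\epsilon$, but then all suboptimal experts have been eliminated, $S_T=\mathcal{S}^*$, and any average of two survivors already has risk at most $R^*$ by convexity.
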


	\begin{remark}
		If the algorithm is allowed to query $m>2$ expert advices per round, then it can be modified to attain an improved excess risk. We present this extension in Section~\ref{sec:m3} in the appendix, and prove that it leads to a rate of $\mathcal{O}\left(\frac{(K/m)^2}{T} \log(KT/\delta)\right)$ , which interpolates for intermediate values of $m$.
	\end{remark}
	
	\paragraph{Proof Sketch of Theorem~\ref{th:main2}}
	
	First, we develop instance-dependent upper and lower bound for $T_{ij}(t)$, for any $i,j \in \intr{K}$ such that: $R(F_i) \neq R(F_j)$. To do this we introduce the following lemma (see Lemma~\ref{lem:1} in the appendix):
	\begin{lemma}
          Let $i,j \in \intr{K}$ such that $R(F_i) \neq R(F_j)$. With probability at least $1-4\delta$, for all $t \ge 1$, if
		\begin{equation*}
		T_{ij}(t) \ge 289 \log\left(K \delta_t^{-1}\right) \max\left\lbrace \frac{L^2d_{ij}^2}{\left|R(F_i) - R(F_j)\right|^2}; \frac{B}{\left|R(F_i) - R(F_j)\right|}\right \rbrace,
		\end{equation*}
		then we have either $\Delta'_{ij} > 0$ or $\Delta'_{ji} > 0$;
				furthermore, if
		\begin{equation*}
		T_{ij}(t) \le  3 \log\left(K \delta_t^{-1}\right) \max\left\lbrace \frac{L^2d_{ij}^2}{\left|R(F_i) - R(F_j)\right|^2}; \frac{B}{\left|R(F_i) - R(F_j)\right|} \right\rbrace,
		\end{equation*}
		then we have: $\Delta'_{ij} \le 0$ and $\Delta'_{ji} \le 0$.
	\end{lemma}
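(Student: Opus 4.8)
The plan is to reduce the lemma to a deterministic consequence of a single time-uniform concentration event, and then analyse the two sample-size regimes separately.

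\emph{Master event.} The engine is the $T_{ij}(t)$-indexed, time-uniform analogue of Corollary~\ref{cor:deltacontrol}. Applying the empirical Bernstein inequality to the i.i.d. increments $l(F_{j,s},Y_s)-l(F_{i,s},Y_s)$ over the $T_{ij}(t)$ rounds in which $(i,j)$ was jointly queried, and using $\mathrm{Var}\bigl(l(F_j,Y)-l(F_i,Y)\bigr)\le L^2 d_{ij}^2$ together with boundedness of the increments by $B$, I extract two complementary bounds. The empirical-variance version yields a small-constant slack $\hat R_{ij}(j,t)-\hat R_{ij}(i,t)\le \bigl(R(F_j)-R(F_i)\bigr)+C_0\max\set[0]{L\alpha_{ij}\hat d_{ij},B\alpha_{ij}^2}$ with $C_0<6$; subtracting the penalty $6\max\set[0]{\cdots}$ then gives $\Delta'_{ij}\le R(F_j)-R(F_i)$ (and symmetrically), so that a positive test statistic never eliminates an optimal expert. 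Converting empirical to population variance gives the true-distance version $R(F_j)-R(F_i)\le \Delta'_{ij}+C_1\alpha_{ij}\max\set[0]{Ld_{ij},B\alpha_{ij}}$. A union bound over $t\ge1$ with the summable weights $\delta_t=\delta/(t(t+1))$, $\sum_t\delta_t=\delta$, applied to the two sides of the risk increment and the two sides of the distance estimate $\hat d_{ij}(t)$, produces an event of probability at least $1-4\delta$ on which all of these hold for the fixed pair $(i,j)$ and every $t$, together with $\tfrac12 d_{ij}\le\hat d_{ij}(t)\le 2d_{ij}$ wherever $\hat d_{ij}$ concentrates. From here the argument is deterministic; write $\Delta R:=|R(F_i)-R(F_j)|$, $\alpha:=\alpha_{ij}(t,\delta_t)$, $M:=\max\set[0]{L^2d_{ij}^2/\Delta R^2,\,B/\Delta R}$, so that $\alpha^2=\log(4K\delta_t^{-1})/T_{ij}(t)$.

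\emph{Large $T_{ij}$ forces elimination.} Assume without loss of generality $R(F_j)>R(F_i)$. The true-distance bound gives $\Delta'_{ij}\ge\Delta R-C_1\alpha\max\set[0]{Ld_{ij},B\alpha}$, so it suffices that the slack be $<\Delta R$. Splitting on which term attains the maximum: when $Ld_{ij}\ge B\alpha$ the requirement $C_1\alpha Ld_{ij}<\Delta R$ reads $T_{ij}>C_1^2\log(4K\delta_t^{-1})L^2d_{ij}^2/\Delta R^2$, and when $B\alpha>Ld_{ij}$ the requirement $C_1B\alpha^2<\Delta R$ reads $T_{ij}>C_1\log(4K\delta_t^{-1})B/\Delta R$; both are implied by the hypothesis $T_{ij}(t)\ge 289\log(K\delta_t^{-1})\,M$ provided the concentration constant satisfies $C_1^2\le289$, the small surplus in $289=17^2$ absorbing the $\log(4K\cdot)$-versus-$\log(K\cdot)$ discrepancy. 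Hence $\Delta'_{ij}>0$ and expert $j$ is eliminated.

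\emph{Small $T_{ij}$ forbids elimination, and the main obstacle.} Here $T_{ij}(t)\le 3\log(K\delta_t^{-1})M$ forces $\alpha^2\ge 1/(3M)$. Using the empirical-variance bound, $\Delta'_{ij}\le\Delta R-(6-C_0)\max\set[0]{L\alpha\hat d_{ij},B\alpha^2}$, so I must show the penalty exceeds $\Delta R/(6-C_0)$; the swapped computation gives $\Delta'_{ji}\le0$ as well, and is easier since its leading term is $-\Delta R$. The decisive and most delicate point is that the penalty involves $\hat d_{ij}$ whereas the threshold is stated through the population $d_{ij}$. This is reconciled by a case split on $M$: when the $B/\Delta R$ term realises $M$ the distance-free part obeys $B\alpha^2\ge\Delta R/3$, and this branch also covers all very small $T_{ij}$, where $B\alpha^2$ dominates $L\alpha\hat d_{ij}$ anyway; when instead $L^2d_{ij}^2/\Delta R^2$ realises $M$, the constraint $L^2d_{ij}^2\ge B\Delta R$ forces the permitted $T_{ij}$ to be large enough — one checks that $3\log(K\delta_t^{-1})L^2d_{ij}^2/\Delta R^2$ exceeds the $(\mathrm{diam}/d_{ij})^2\log$ sample size needed for $\hat d_{ij}\ge d_{ij}/2$, using $\mathrm{diam}(\cX)\le B/L$ — so that $L\alpha\hat d_{ij}\ge\Delta R/(2\sqrt3)$. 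In both branches the penalty dominates $\Delta R$, giving $\Delta'_{ij}\le0$. I expect this reconciliation of empirical and population distances across the sample-size regimes, and the verification that the gap between the constants $3$ and $289$ simultaneously absorbs $C_0$, $C_1$, the penalty factor $6$, the log discrepancy and the distance conversion, to be the only genuinely technical part of the proof.
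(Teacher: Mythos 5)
Your proposal is correct and follows essentially the same route as the paper: a time-uniform empirical-Bernstein event of probability $1-4\delta$ (the paper's event $\mathcal{A}$), followed by a deterministic two-regime analysis in which the large-$T_{ij}$ case turns the condition on $T_{ij}$ into a bound on $\alpha_{ij}$ relative to $|R_i-R_j|$, and the small-$T_{ij}$ case shows the penalty term swamps the empirical risk gap. The only cosmetic difference is that your reconciliation of $\hat d_{ij}$ with $d_{ij}$ proceeds by an explicit case split on which term realises the maximum and on the size of $T_{ij}$, whereas the paper packages the same dichotomy into the single max-form inequalities \eqref{conc3}--\eqref{conc4}, which automatically cover the small-sample regime where $\hat d_{ij}$ need not concentrate.
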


	This lemma gives in particular an upper bound on the number of allocations needed for an expert $i$ to be eliminated by an optimal expert $i^*$ (i.e. to fail the test $\Delta_{ii^*} \le 0$). Then, we derive a bound on the number of rounds $T_{\epsilon}$ required to eliminate all the experts in $\mathcal{S}_{\epsilon}^c$ and we conclude by showing that $T-T_{\epsilon}$ is large enough to ensure that the experts $\hat{k}$ and $\hat{l}$ in algorithm~\ref{algo:bud} satisfy $T_{\hat{k}\hat{l}} > 1/\epsilon$ with high probability.

	%
	%
	%
	%
	%
	%

	\section{Lower Bounds for $m=1$ or $p=1$}\label{sec:low}
	This section considers the case where the agent is restricted to selecting one expert at the end of the procedure ($p=1$), and the case where the learner is restricted to see only one feedback per round ($m=1$). We show that in either case it is impossible to do better than an excess risk $\mathcal{O}\paren[1]{{1}/{\sqrt{T}}}$ in deviation.
	
	Lemma~\ref{lem:lower1} is a direct consequence of a more general lower bound in \cite{lee1998importance}, which proved that if the closure of the experts class is non-convex, and a single expert must be picked at the end (``proper'' learning rule), then even under full information access during training the best achievable rate with high probability is $\mathcal{O}\paren[1]{{1}/{\sqrt{T}}}$.

	\begin{lemma}($p=1$)\label{lem:lower1}
		Consider the squared loss function. For $K=m=2$ and $p=1$, for any $T>0$,
		and for any convex combination of the experts $\hat{g}$ output after $T$ training rounds,
		there exists a probability distribution for experts $\left\lbrace F_1, F_2 \right \rbrace$ and target variable $Y$ (all bounded by $1$) such that,
		with probability at least $0.1$,
		\begin{equation*}
		\hat{R}_T\left(\hat{g}\right) - R^* 
		\ge \frac{c_1}{\sqrt{T}},
		\end{equation*}
		where $c_1>0$ is an absolute constant. 
	\end{lemma}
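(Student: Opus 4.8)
The plan is to use the classical two-point (Le Cam) method: I would exhibit a pair of hard distributions that differ only in a parameter that is statistically indistinguishable from $T$ samples, yet forces any single-expert output to pay a $1/\sqrt{T}$ penalty. Concretely, take both experts to be \emph{deterministic} constants, $F_1 \equiv 1$ and $F_2 \equiv -1$, and let $Y$ be a sign variable in $\{-1,+1\}$. For a sign $s\in\{+,-\}$ and a small parameter $\gamma = c/\sqrt{T}$, let $P_s$ be the law with $\prob{Y=1} = (1+s\gamma)/2$, so $\e{Y}=s\gamma$. A one-line computation with the squared loss gives $R(F_1) = 2(1-\e{Y})$ and $R(F_2) = 2(1+\e{Y})$; hence under $P_+$ the best expert is $F_1$, under $P_-$ it is $F_2$, and in either case the two experts' risks differ by exactly $R(F_2)-R(F_1) = 4\e{Y} = \pm 4\gamma$.

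The constraint $p=1$ enters in the reduction to a testing problem. Since only one expert may be used for prediction, the output $\hat g$ is necessarily one of $F_1,F_2$, i.e.\ a data-dependent index $\hat{\imath}\in\{1,2\}$; choosing the suboptimal expert incurs excess risk exactly $4\gamma$, and choosing the optimal one incurs $0$. Because the experts are constant, the training sample collapses to the i.i.d.\ observations $(Y_1,\dots,Y_T)$ — full information during training is no help, as the experts' advice carries no information. So it suffices to show that \emph{any} (possibly randomized) selection rule $\hat{\imath}(Y_1,\dots,Y_T)$ errs, under at least one of $P_+,P_-$, with probability bounded below by an absolute constant.

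This I would get from Le Cam's inequality $\max_{s}\prob[0]{\hat{\imath}\ \text{wrong under}\ P_s}\ge \tfrac12\paren[0]{1-\mathrm{TV}(P_+^{\otimes T},P_-^{\otimes T})}$, controlling the total variation via Pinsker and the chain rule for KL divergence: $\mathrm{KL}(P_+^{\otimes T}\|P_-^{\otimes T}) = T\,\mathrm{KL}(P_+\|P_-) = T\,\gamma\log\tfrac{1+\gamma}{1-\gamma}\le 4T\gamma^2$ for $\gamma\le 1/2$. With $\gamma=c/\sqrt{T}$ this total divergence equals the constant $4c^2$, so $\mathrm{TV}\le c\sqrt{2}$; choosing $c$ small (e.g.\ $c=1/4$) makes the error probability at least $1/4>0.1$. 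On the error event the excess risk is $4\gamma = 4c/\sqrt{T}$, giving the claim with $c_1=4c$. Boundedness by $1$ is automatic since $\gamma\le 1$, and the squared loss on $[-1,1]$ also satisfies Assumption~\ref{assump}, so the instance lives in exactly the class for which the upper bounds were proved.

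The construction is essentially forced and the steps are standard, so the main work is bookkeeping rather than a conceptual obstacle: one must tune the single free parameter $c$ so that the error probability clears the threshold $0.1$ \emph{and} the penalty $4\gamma$ retains the form $c_1/\sqrt{T}$, and check the small-$T$ regime (where $\gamma$ could otherwise exceed the boundedness budget). Conceptually, the only point to articulate is \emph{why} $p=1$ is essential: the non-convexity of the two-point class $\{F_1,F_2\}$ means a mixture such as $\tfrac12(F_1+F_2)\equiv 0$, of risk $1$, is far better than either vertex (of risk $\approx 2$) but is unavailable, leaving only two vertices whose order is statistically undecidable at scale $\gamma\sim 1/\sqrt{T}$. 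This is precisely the general non-convexity phenomenon of \cite{lee1998importance}, of which the present lemma is the simplest instance, and I would note that one may alternatively invoke that result directly rather than rerun the two-point argument.
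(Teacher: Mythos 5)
Your proposal is correct and follows essentially the same route as the paper's proof: deterministic constant experts, a Bernoulli target with mean shifted by $\pm c/\sqrt{T}$, and the observation that $p=1$ reduces the problem to a two-point hypothesis test whose error probability is bounded below by a constant. The only difference is that you bound the test error by hand via Le Cam and Pinsker, whereas the paper invokes a packaged Bernoulli-estimation lower bound (Lemma~5.1 of \cite{anthony2009neural}); the two are interchangeable here.
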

	
	The second result shows that the same lower bound holds for the bandit feedback ($m=1$) setting, even if the learner is allowed to predict using a convex combination of all the experts at the end. To the best of our knowledge, this is the first lower bound for deviations in this setting. 
	
	\begin{lemma}($m=1$)\label{lem:lower2}
		Consider the squared loss function. 
		For $K=p=2$, and $m=1$, for any $T>0$, for any convex combination of the experts $\hat{g}$ output after $T$ training rounds,
		there exists a probability distribution for experts $\left\lbrace F_1, F_2 \right \rbrace$ and target variable $Y$ (all bounded by $1$) such that  with probability at least $0.1$,
		\begin{equation*}
		\hat{R}_T\left( \hat{g}\right) - R^* 
		\ge \frac{1}{2\sqrt{T}}.
		\end{equation*} 
	\end{lemma}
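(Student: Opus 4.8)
The plan is to prove this lower bound by a two-point (Le Cam) argument: I would exhibit two instances of the problem, indexed by a hidden label $\theta\in\{1,2\}$ designating which of the two experts is the better one, that are statistically almost indistinguishable to a learner restricted to one query per round, yet demand essentially opposite predictions. Concretely, let $Y$ have a fixed marginal that does not depend on $\theta$, and construct the experts so that their prediction errors $e_i=F_i-Y$ are \emph{nested}: on the instance $\theta=1$ one sets $e_2=e_1+\nu$ with $\nu$ a zero-mean noise independent of $e_1$, and symmetrically $e_1=e_2+\nu$ on $\theta=2$, all variables being rescaled to lie in $[-1,1]$. The single parameter governing the construction is the noise level $s:=\mathbb{E}[\nu^2]$, which I would calibrate to $s\asymp 1/\sqrt{T}$.

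The construction is engineered to enforce two properties. First, averaging cannot help: on the instance $\theta=1$ the risk of any convex combination $\hat g_w=wF_1+(1-w)F_2$ equals $R(\hat g_w)=R^*+(1-w)^2 s$, because the cross term $\mathbb{E}[e_1\nu]$ vanishes, so the minimum is attained exactly at the good corner $w=1$, and symmetrically at $w=0$ on $\theta=2$. Hence for \emph{any} fixed weight $w\in[0,1]$ at least one of the two instances has excess risk $\max\{(1-w)^2,w^2\}\,s\ge s/4$. This is the step that neutralizes the freedom $p=2$: the positive correlation of the errors removes the variance-reduction benefit of averaging that would otherwise let the learner beat the best single expert. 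Second, a single query is barely informative: querying expert $i$ reveals only a sample of $(F_i,Y)$, whose law differs between the two instances only through the $O(s)$ change in the spread of $e_i$, so the per-round Kullback--Leibler divergence between the two observation laws is $O(s^2)$.

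I would then apply the standard divergence decomposition for adaptively chosen queries, which bounds the KL divergence between the two laws of the whole observation sequence by $\sum_i \mathbb{E}[T_i]\cdot \mathrm{KL}_i \le T\cdot O(s^2)$, where $T_i$ is the (random) number of rounds expert $i$ is queried and $\mathrm{KL}_i$ the corresponding per-sample divergence; this is exactly where the restriction $m=1$ enters, since only the queried arm contributes and the total query count is $T$. Calibrating $s\asymp 1/\sqrt{T}$ makes the total divergence $O(1)$, so by Pinsker the total variation between the two observation laws stays bounded away from $1$. Writing $A=\{w\le 1/2\}$ for the event that the learner's output favors expert $2$, Le Cam's inequality gives $\mathbb{P}_{1}(A)+\mathbb{P}_{2}(A^c)\ge 1-\mathrm{TV}(P_1,P_2)$, which is bounded below by a positive constant; on $\theta=1$ over $A$ (respectively $\theta=2$ over $A^c$) the excess risk is $\ge s/4$, so on at least one instance the excess risk exceeds $s/4\asymp 1/\sqrt{T}$ with probability at least $0.1$. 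Choosing the constants in $s$ and in the fixed marginal of $Y$ so that $s/4\ge 1/(2\sqrt{T})$ while the total-variation bound still yields probability $0.1$ produces the stated constants.

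The main obstacle is the tension built into the calibration of $s$: the same noise level that creates the excess-risk gap also shifts the single-expert marginals and thereby feeds information to the learner, so one must check that $s\asymp 1/\sqrt{T}$ simultaneously keeps the excess risk $\ge 1/(2\sqrt{T})$ and the total divergence small enough for the $0.1$ probability — this is precisely where the $\sqrt{T}$ rate is forced and where the constants must be nursed. Two secondary technical points require care: realizing the nested-error construction with variables bounded by $1$ (so Gaussian noises must be replaced by bounded ones with the same $O(s^2)$ divergence and $\Theta(s)$ risk-gap scaling), and the fact that the per-expert query counts $T_i$ are data-dependent, which is exactly why the divergence-decomposition lemma, rather than a naive product-measure bound, is needed to accommodate the learner's adaptivity.
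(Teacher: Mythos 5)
Your proposal is correct and follows essentially the same route as the paper's proof: a two-point argument with \emph{nested} (strongly coupled) expert errors so that any convex combination is nearly as bad as the rounded single expert, observational equivalence to a two-armed bandit under $m=1$, and the divergence decomposition with gap calibrated to $1/\sqrt{T}$. The only differences are cosmetic: the paper realizes the nesting with coupled Bernoulli indicators and $Y\equiv 0$ (a mean perturbation) and invokes the Bretagnolle--Huber inequality, whereas you perturb the error variance and invoke Le Cam/Pinsker.
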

	
	\section{Conclusion}
	
	We discussed the impact of restricted access to information in generalization error minimization with respect to the best expert. As many classical methods, such as progressive mixture rules
        (and randomized versions thereof) are deviation suboptimal, we proposed a new procedure achieving fast rates with high probability. We focused on the global budget setting, where a constraint on the total number of expert  queries is made, and the local budget, where a limited number of expert advices are shown per round. Moreover, we proved fast rates are impossible to achieve if the agent is allowed to see just one expert advice per round or choose just one expert for prediction.
	
	An interesting future direction is allowing experts to learn from data during the process. In this case, the i.i.d. assumption on the loss sequence is dropped, which necessitates deriving a new concentration for the key quantities. 
	
        {\bf Acknowledgements}

                We acknowledge support from the Agence Nationale de la Recherche (ANR),
        ANR-19-CHIA-0021-01 ``BiSCottE''; and the
        Franco-German University (UFA) through the binational Doktorandenkolleg CDFA 01-18.

\bibliographystyle{abbrv}
\bibliography{bib_data_base}

\newpage

\appendix

{\bf \Large Appendix: proofs}

	\section{Notation}

The following notation pertains to all the considered algorithms,
where $t$ is a given training round:
\begin{itemize}
	\item Let $\mathcal{T}_i(t)$ denote the set of training round indices where the advice of expert $i$ was queried and let  $T_i(t) := \left| \mathcal{T}_i(t)\right|$.
	\item Let $\mathcal{T}_{ij}(t)$ denote the set of training round indices where the advice of experts $i$ and $j$ where jointly queried and let $T_{ij}(t):= \left| \mathcal{T}_{ij}(t) \right|$.
	\item Let $\hat{R}_{ij}(j,t)$ denote the empirical loss of expert $j$ calculated using only the $T_{ij}(t)$ samples queried for $(i,j)$ jointly:
	\begin{equation*}
	\hat{R}_{ij}(j, t) := \frac{1}{T_{ij}(t)} \sum_{s \in \mathcal{T}_{ij}(t)} l\left(F_{j,s}, Y_s\right).
	\end{equation*}
	\item $\hat{R}_i(t)$ denote the empirical loss of expert $i$ calculated using the $T_i(t)$ queried samples:
	\begin{equation*}
	\hat{R}_{i}(t) := \frac{1}{T_{i}(t)} \sum_{s \in \mathcal{T}_{i}(t)} l\left(F_{i,s}, Y_s\right).
	\end{equation*}
	\item Define $\alpha_{ij}(t, \delta) := \sqrt{\frac{\log(4K\delta^{-1})}{T_{ij}(t)}}$ if $T_{ij}(t)>0$ and $\alpha_{ij}(t) = \infty$ otherwise.
	\item Define $\alpha_i(t, \delta) := \sqrt{\frac{\log\left(4K\delta^{-1}\right)}{T_i(t)}}$ if $T_{i}(t)>0$ and $\alpha_{i}(t) = \infty$ otherwise.
	\item Let $\hat{d}_{ij}(t)$ denote the empirical $L_2$ distance between experts $i$ and $j$ based on the $T_{ij}(t)$ queried samples:
	\begin{equation*}
	\hat{d}^2_{ij}(t) := \frac{1}{T_{ij}(t)} \sum_{s \in \mathcal{T}_{ij}(t)} \left(F_{i,s} - F_{j,s}\right)^2.
	\end{equation*}
	\item Define $\Delta'_{ij}(t, \delta) := \hat{R}_{ij}(j, t) - \hat{R}_{ij}(i, t) - 6 \alpha_{ij}(t, \delta) \max\left \lbrace L  \hat{d}_{ij}(t), B \alpha_{ij}(t, \delta) \right \rbrace$.
	\item Let $d_{ij}$ denote the $L_2$ distance between experts $i$ and $j$:
	\begin{equation*}
	d_{ij} := \mathbb{E}\left[ \left(F_i - F_j\right)^2\right].
	\end{equation*}
	\item We denote $R(.)$ the expected risk function: $R(.) = \mathbb{E}[l(.,Y)]$, and define $R_i = R(F_i)$ for $i \in \intr{K}$. 
\end{itemize}

\section{Some preliminary results}
The lemma below shows that for a set $\mathcal{Y} \subseteq \mathbb{R}^d$ and a convex set $\mathcal{X} \subseteq \mathbb{R}^d$, if there exists a function $l:\mathcal{X}\times \mathcal{Y} \to \mathbb{R}$ that is Lipschitz and strongly convex on its first argument, then the function $l$ and the set $\mathcal{X}$ are bounded.

\begin{lemma}\label{assump_cons}
	Let $\mathcal{X} \subseteq \mathbb{R}^d$ be a non-empty convex set, let $\mathcal{Y} \subseteq \mathbb{R}^d$ and $l: \mathcal{X} \times \mathcal{Y} \to \mathbb{R}$ be a function such that for all $y\in \mathcal{Y}$ $l(.,y)$ is $L$-Lipschitz and $\rho$-strongly convex, then we have: 
	\begin{itemize}
		\item $\sup_{x,x' \in \mathcal{X}} \norm{x-x'}\le \frac{B}{L}= 8\frac{L}{\rho^2}$.
		\item $\sup_{x,x' \in \mathcal{X}, y\in \mathcal{Y}} \left| l(x,y) - l(x',y)\right| \le B:=8\frac{L^2}{\rho^2}$
	\end{itemize} 
\end{lemma}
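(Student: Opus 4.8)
The plan is to exploit the tension between the two hypotheses: $\rho$-strong convexity forces $l(\cdot,y)$ to grow at least quadratically away from any point, while $L$-Lipschitzness caps its growth at linear order. A function satisfying both cannot have an arbitrarily large domain, and comparing the quadratic lower envelope against the linear upper envelope will pin down the diameter. Since we are only given Lipschitzness and strong convexity (not differentiability), I would avoid subgradients entirely and use the \emph{midpoint} form of strong convexity, which follows directly from the stated definition.

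First I would fix $y\in\mathcal{Y}$, write $f=l(\cdot,y)$, and take arbitrary $x,x'\in\mathcal{X}$ with midpoint $m=\tfrac{1}{2}(x+x')\in\mathcal{X}$ (using convexity of $\mathcal{X}$). Applying the convexity of $x\mapsto f(x)-\tfrac{\rho^2}{2}\norm{x}^2$ at the pair $x,x'$ and simplifying the quadratic terms via the identity $\norm{x+x'}^2-2\norm{x}^2-2\norm{x'}^2=-\norm{x-x'}^2$ yields the midpoint bound
\begin{equation*}
f(m)\le \tfrac{1}{2}\paren{f(x)+f(x')}-\tfrac{\rho^2}{8}\norm{x-x'}^2 .
\end{equation*}
Rearranging gives $\tfrac{\rho^2}{8}\norm{x-x'}^2\le \tfrac12\paren{f(x)-f(m)}+\tfrac12\paren{f(x')-f(m)}$, and since $\norm{x-m}=\norm{x'-m}=\tfrac12\norm{x-x'}$, the $L$-Lipschitz property bounds each difference by $\tfrac{L}{2}\norm{x-x'}$. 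This collapses to $\tfrac{\rho^2}{8}\norm{x-x'}^2\le \tfrac{L}{2}\norm{x-x'}$, hence $\norm{x-x'}\le 4L/\rho^2\le 8L/\rho^2=B/L$. As $x,x'$ were arbitrary this bounds the diameter, giving the first claim.

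The second claim then follows with no extra work: for any $x,x'\in\mathcal{X}$ and $y\in\mathcal{Y}$, Lipschitzness together with the diameter bound just established gives $\abs{l(x,y)-l(x',y)}\le L\norm{x-x'}\le L\cdot B/L=B$, and taking the supremum concludes.

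I do not expect a serious obstacle here; the only point that requires care is avoiding any differentiability assumption, which is precisely why I would invoke strong convexity in its midpoint form rather than through gradient inequalities. Tracking the numerical constant is the other thing to watch, but it works in our favour: the argument actually produces diameter $\le 4L/\rho^2$, comfortably inside the claimed $8L/\rho^2$, so the stated constant $B=8L^2/\rho^2$ holds with room to spare.
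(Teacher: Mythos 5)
Your proof is correct and follows essentially the same route as the paper's: midpoint strong convexity combined with the parallelogram identity to get $\tfrac{\rho^2}{8}\norm{x-x'}^2$ on one side, then the Lipschitz bound on the other, with the second claim following immediately from the diameter bound. Your constant tracking is in fact slightly sharper (diameter $\le 4L/\rho^2$ versus the paper's looser $8L/\rho^2$), which is consistent with the stated bound.
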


\begin{proof}
	Let $y\in \mathcal{Y}$ and $x_0 , x \in \mathcal{X}$, using the $\rho$-strong convexity of $l(.,y)$ we have:
	\begin{equation*}
	l\left(\frac{x + x_0}{2}, y\right)-\frac{\rho^2}{2} \norm{\frac{x + x_0}{2}}^2 \le \frac{1}{2} \left( l(x_0, y)-\frac{\rho^2}{2} \norm{x_0}^2 \right) + \frac{1}{2} \left( l\left(x, y \right)-\frac{\rho^2}{2} \norm{x}^2 \right) 
	\end{equation*} 
	
	Which implies:
	\begin{equation*}
	\frac{\rho^2}{2} \left( \frac{1}{4} \norm{x_0 + x}^2 - \frac{1}{2} \norm{x_0}^2 - \frac{1}{2} \norm{x}^2\right) \le l\left(\frac{x+x_0}{2}, y\right) - \frac{l(x, y)+l(x_0, y)}{2}.
	\end{equation*}
	
	Using the parallelogram law and the assumption that $l$ is $L$-Lipschitz we have:
	\begin{equation*}
	\frac{\rho^2}{8} \norm{x-x_0}^2 \le L \norm{x-x_0},
	\end{equation*}
	which proves that $\text{diam}\left(\mathcal{X}\right) \le 8 \frac{L}{\rho^2}$. Now using the assumption that $l(.,y)$ is $L$-Lipschitz, we have:
	\begin{align*}
	\left|l(x,y) - l(x_0, y)\right| &\le L\norm{x-x_0} \\
	&\le 8 \frac{L^2}{\rho^2},
	\end{align*}
	which proves the second claim.
\end{proof}

For any $y\in \cY$, let $l^*(y) = \min_{x \in \cX} l(x,y)$, which exists since $l$ is continuous
in $x$ and $\cX$ is a closed
bounded set by the previous lemma, and let $\wt{l}(x,y) :=l(x,y) -l^*(y)$.
By the previous lemma, $\wt{l}(x,y) \in [0,B]$; also, note that the proposed algorithms
remain unchanged if we replace the loss $l$ by $\wt{l}$, since the
algorithms only depend on loss differences
for different predictions $x,x'$ and the same $y$. Similarly, the excess loss
of any predictor remains unchanged when replacing $l$ by $\wt{l}$. Therefore, without loss
of generality we can assume that the loss function always takes values in $[0,B]$, which we
do for the remainder of the paper.

The following lemma is technical, it will be used in the proof of the instance dependent bound (Theorem~\ref{th:main2}).
\begin{lemma}\label{cl:calpure}
	Let $x \ge 1, c \in (0,1)$ and $y >0$ such that:
	\begin{equation}
	\label{eq:hypineq}
	\frac{\log(x/c)}{x} > y.
	\end{equation}
	Then:
	\begin{equation*}
	x < \frac{ 2\log\left( \frac{1}{cy} \right) }{ y}.
	\end{equation*}
\end{lemma}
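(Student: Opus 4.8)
The plan is to invert the transcendental inequality \eqref{eq:hypineq} by linearizing the logarithm. First I would rewrite the hypothesis in the equivalent additive form $xy < \log(x/c) = \log x + \log(1/c)$; here the condition $x\ge 1$ together with $y>0$ guarantees $\log(x/c) > xy > 0$, so all the logarithms appearing are in a benign regime. The obstacle is that $x$ appears both linearly (as $xy$) and inside the logarithm, so I cannot solve for $x$ directly; the whole point of the lemma is to extract a clean closed-form upper bound.

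The key tool is the elementary inequality $\log x \le \beta x - \log\beta - 1$, valid for every $x>0$ and every $\beta>0$. This is just the tangent-line bound for the concave map $x\mapsto \log x$: the function $x\mapsto \log x - \beta x$ is maximized at $x=1/\beta$, where its value is $-\log\beta-1$. Substituting this into the additive form of the hypothesis gives $xy < \beta x - \log\beta - 1 + \log(1/c)$, i.e. $(y-\beta)x < \log(1/c) - \log\beta - 1$. The crucial choice is $\beta = y/2$, which leaves a coefficient $y/2$ on the left and turns the bound into a genuine linear-in-$x$ estimate:
\begin{equation*}
\frac{y}{2}\,x < \log\frac{1}{c} - \log\frac{y}{2} - 1 = \log\frac{2}{cy} - 1.
\end{equation*}

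Rearranging yields $x < \frac{2}{y}\bigl(\log\frac{2}{cy} - 1\bigr)$, and the proof finishes by absorbing the factor $2$ inside the logarithm into the $-1$: since $\log 2 - 1 < 0$, we have $\log\frac{2}{cy} - 1 = \log\frac{1}{cy} + (\log 2 - 1) < \log\frac{1}{cy}$, whence $x < \frac{2}{y}\log\frac{1}{cy}$, which is exactly the claim. I do not expect a genuine difficulty here; the only real choices are recognizing that the tangent-line bound for $\log$ is the right device and that $\beta = y/2$ is the correct tuning (any $\beta \in (0,y)$ gives a bound, but $\beta = y/2$ is what produces the constant $2$ and the clean $\log(1/(cy))$ after the $\log 2 < 1$ simplification).
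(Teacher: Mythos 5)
Your proof is correct. Every step checks out: the tangent-line bound $\log x \le \beta x - \log\beta - 1$ is the standard concavity estimate with equality at $x=1/\beta$; the choice $\beta = y/2$ turns the hypothesis $xy < \log x + \log(1/c)$ into $\tfrac{y}{2}x < \log\tfrac{2}{cy} - 1$; and absorbing $\log 2 - 1 < 0$ gives exactly the claimed bound. The route is genuinely different from the paper's, though both arguments are at heart a "linearize the logarithm with slope $1/2$" trick applied at different levels. The paper first isolates $x < \log(x/c)/y$, takes logarithms a second time to get $\log(x/c) < \log(1/(yc)) + \log\log(x/c)$, bounds the iterated logarithm via $\log t \le t/2$, solves the resulting self-bounding inequality for $\log(x/c)$, and plugs back. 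You instead linearize $\log x$ directly in the original inequality with a $y$-dependent slope, which avoids the iterated logarithm entirely and solves for $x$ in one pass; the price is that the correct tuning $\beta = y/2$ has to be guessed, whereas the paper's fixed bound $\log t \le t/2$ requires no tuning. Both yield the same constant $2$, and in the degenerate regime $cy \ge 1$ (where the hypothesis is incompatible with $x \ge 1$) your chain correctly produces a vacuous implication, so there is no hidden gap.
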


\begin{proof}
	Inequality~\eqref{eq:hypineq}
	implies
	\[
	x < \frac{\log(x/c)}{y},
	\]
	and further
	\[
	\log(x/c) < \log(1/yc) + \log \log(x/c) \leq \log(1/yc) + \frac{1}{2}\log(x/c),            
	\]
	since it can be easily checked that $\log(t) \leq t/2$ for all $t>0$.
	Solving and plugging back into the previous display leads to the claim.
\end{proof}

\section{Some concentration results}
In this section, we present concentration inequalities for the key quantities used in our analysis. Recall that
Lemma~\ref{assump_cons} shows that under assumption~\ref{assump}, without loss of generality we can assume that
the  loss function takes values in $[0,B]$, $B:=8L^2/\rho^2$.

The following lemma gives the main concentration inequalities we need:
\begin{lemma} \label{lem:lemconc} Suppose Assumption~\ref{assump} holds.
	For any integer $t \ge 1$, and $\delta\in[0,1]$, with probability at least $1 - 3\delta$, for all $i, j \in \intr{K}$:
	\begin{align*}
	\left| \left(\hat{R}_{ij}(i,t) - \hat{R}_{ij}(j,t)\right) - \left(R_i - R_j\right) \right| &\le \sqrt{2}L~\hat{d}_{ij}~\alpha_{ij}(t, \delta) + 3B~\alpha^2_{ij}(t, \delta) \\
	\left| \hat{d}_{ij}^2 - d_{ij}^2\right| &\le \max\left \lbrace 2\frac{B}{L}~\alpha_{ij}(t, \delta)~d_{ij}~;~ 6\left( \frac{B}{L} \right)^2~\alpha^2_{ij}(t, \delta) \right \rbrace \\
	\left|\hat{R}_i(t) - R_i\right| &\le 2B\alpha_{i}(t, \delta).
	\end{align*}
\end{lemma}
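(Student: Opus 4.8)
The plan is to establish the three displayed inequalities separately, allocating a failure probability $\delta$ to each so that, by a union bound, all three hold simultaneously with probability at least $1-3\delta$; within each inequality I would take a further union bound over the relevant experts and pairs, the cost of which is reflected in the logarithmic factor $\log(4K\delta^{-1})$ appearing in the definitions of $\alpha_{ij}(t,\delta)$ and $\alpha_i(t,\delta)$. Throughout I rely on the reduction following Lemma~\ref{assump_cons}: without loss of generality the loss takes values in $[0,B]$, and $\mathrm{diam}(\cX)\le B/L$. The three bounds are then instances of three classical concentration inequalities, and the only non-routine ingredient is the use of the Lipschitz assumption to convert fluctuations of the \emph{loss} into the $L_2$ quantities $\hat d_{ij}$ and $d_{ij}$.

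For the first inequality I would fix a pair $(i,j)$ and set $Z_s := l(F_{i,s},Y_s)-l(F_{j,s},Y_s)$ over the joint-query rounds $s\in\cT_{ij}(t)$; these are i.i.d.\ with mean $R_i-R_j$ and range contained in $[-B,B]$. The Lipschitz property gives $Z_s^2\le L^2\norm{F_{i,s}-F_{j,s}}^2$, so the empirical second moment of $(Z_s)$ is at most $L^2\hat d_{ij}^2(t)$, and hence so is its empirical variance. Applying the empirical Bernstein inequality (in which the empirical variance enters the leading term) then yields a deviation bounded by $\sqrt{2}\,L\,\hat d_{ij}(t)\,\alpha_{ij}(t,\delta)$ together with a lower-order remainder of order $B\,\alpha_{ij}^2(t,\delta)$, which accounts for the stated $3B\alpha_{ij}^2$ term. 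The presence of the \emph{empirical} distance $\hat d_{ij}$ on the right-hand side is precisely what dictates the use of the empirical Bernstein variant here.

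For the second inequality the right-hand side features the \emph{true} distance $d_{ij}$, so I would instead use the ordinary (true-variance) Bernstein inequality applied to $W_s:=\norm{F_{i,s}-F_{j,s}}^2$, which lies in $[0,(B/L)^2]$ and has mean $d_{ij}^2$; boundedness gives $\mathrm{Var}(W_s)\le (B/L)^2 d_{ij}^2$. Bernstein's inequality then produces a variance term of order $(B/L)\,d_{ij}\,\alpha_{ij}$ and a bounded-range term of order $(B/L)^2\alpha_{ij}^2$, which I would combine into the stated maximum. The third inequality is the simplest: each $l(F_{i,s},Y_s)\in[0,B]$, so Hoeffding's inequality gives $\abs{\hat R_i(t)-R_i}\le B\sqrt{\log(2/\delta')/(2T_i(t))}$ for a fixed expert, and the factor $2$ in $2B\alpha_i$ together with the $4K$ inside the logarithm leaves ample slack to absorb the union bound over the $K$ experts.

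The step I expect to be the main obstacle is that the sample counts $T_{ij}(t)$ and $T_i(t)$ are \emph{random} and data-dependent, since which experts are queried and which are eliminated depends on past observations; a naive fixed-$n$ application of the above bounds is therefore not directly valid. The clean resolution is an optional-skipping (martingale) argument: because the data at each round is i.i.d.\ and drawn independently of the algorithm's decisions made strictly before that round, the subsequence of samples actually collected for any fixed pair $(i,j)$ is itself an i.i.d.\ sample from the data distribution, and the estimates above apply verbatim to it at any deterministic count. Passing to the random index $T_{ij}(t)$ then requires a bound uniform over the sample size (a peeling / union-over-$n$ argument); its cost is exactly what is paid, in the applications, by substituting $\delta_t=\delta/(t(t+1))$ for $\delta$. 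I would make this i.i.d.-subsequence reduction precise first, and only afterwards invoke the Bernstein and Hoeffding estimates, whose remaining calculations are entirely routine.
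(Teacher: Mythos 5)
Your proposal matches the paper's proof: the first inequality via the empirical Bernstein inequality with the empirical variance of $l(F_i,Y)-l(F_j,Y)$ bounded by $L^2\hat d_{ij}^2$ through the Lipschitz property, the second via the standard Bernstein inequality for $(F_i-F_j)^2$ with variance bounded by $(B/L)^2 d_{ij}^2$, and the third via Hoeffding. Your additional care about the randomness of $T_{ij}(t)$ and $T_i(t)$ is well placed and is exactly what the paper pays for later by the union bound over $t$ with $\delta_t=\delta/(t(t+1))$ in the definition of the event $\mathcal{A}$.
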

\begin{proof}
	The first inequality is a direct consequence of the empirical Bernstein inequality (Theorem 4 in \cite{DBLP:conf/colt/MaurerP09}). Recall that $l$ is $L$-Lipschitz in its first argument. Hence, we have the following bound on the empirical variance of the variable: $l(F_i,Y) - l(F_j,Y)$.
	\begin{align*}
	\widehat{\text{Var}}\left[ l(F_i,Y) - l(F_j,Y)\right] & := \frac{2}{T_{ij}(t) \left( T_{ij}(t) - 1\right)} \sum_{u,v \in \mathcal{T}_{ij}(t)} \left(l\left(F_{i,u},Y_u\right) - l\left(F_{j,u},Y_u\right) - l\left(F_{i,v},Y_v\right) + l\left(F_{j,v},Y_v\right) \right)^2\\
	& \le \frac{1}{T_{ij}(t)}\sum_{u \in \mathcal{T}_{ij}(t)} \left( l\left(F_{i,u},Y_u\right) - l\left(F_{j,u},Y_u\right) \right)^2  \\
	&\le L^2~\hat{d}_{ij}^2.
	\end{align*}
	The second inequality is a consequence of Bernstein inequality applied to $\hat{d}^2_{ij}$, we used the following bound on the variance of the variable $\left( F_i - F_j\right)^2$:
	\begin{align*}
	\text{Var}\left[ \left( F_i - F_j\right)^2\right] & \le \mathbb{E}\left[ \norm{F_i - F_j}^4 \right]\\
	&\le \sup_{i,j \in [K]} \norm{F_i - F_j}^2 \mathbb{E}\left[ \norm{F_i - F_j}^2 \right]\\
	&\le \left(\frac{B}{L}\right)^2 d_{ij}^2.
	\end{align*}
	Finally, the last inequality stems from Hoeffding's inequality. 
\end{proof}
\begin{corollary} \label{cor:deltacontrol}
	Let $T>0$ be fixed. In the full information case $(m=K)$,
	with probability at least $1 - 2\delta$, it holds:
	\begin{align}
	\label{eq:deltacontrol2}
	\text{For all } i, j \in \intr{K}: \qquad 
	\Delta_{ij} & \leq (R_j - R_i) \leq \Delta_{ij} + 32\alpha \max\paren{ L d_{ij}, B \alpha}.          
	\end{align}
\end{corollary}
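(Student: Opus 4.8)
The plan is to specialize the two concentration bounds of Lemma~\ref{lem:lemconc} to the full-information regime and combine them. When $m=K$ every pair is observed in every round, so $T_{ij}(t)=T$, $\alpha_{ij}(t,\delta)=\alpha$, and the empirical quantities $\hat R_{ij}(i,t),\hat d_{ij}(t)$ collapse to the batch quantities $\hat R(F_i),\hat d_{ij}$ of Section~\ref{sec:fullinfo}; in particular $\Delta'_{ij}$ coincides with $\Delta_{ij}$. I first invoke the first two inequalities of Lemma~\ref{lem:lemconc}, each holding with probability $1-\delta$ uniformly over $i,j$; a union bound over these two events gives the probability $1-2\delta$ of the statement, and I work on their intersection for the rest.

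Write $M:=\max\{L\hat d_{ij},B\alpha\}$, so that $\Delta_{ij}=\hat R(F_j)-\hat R(F_i)-6\alpha M$. The first inequality gives $|(\hat R(F_j)-\hat R(F_i))-(R_j-R_i)|\le \sqrt2\,L\hat d_{ij}\,\alpha+3B\alpha^2$, and since $L\hat d_{ij}\le M$ and $B\alpha\le M$ the right-hand side is at most $(\sqrt2+3)\alpha M\le 6\alpha M$. The lower bound is then immediate, $R_j-R_i\ge(\hat R(F_j)-\hat R(F_i))-6\alpha M=\Delta_{ij}$, and substituting $\hat R(F_j)-\hat R(F_i)=\Delta_{ij}+6\alpha M$ into the upper side yields $R_j-R_i\le \Delta_{ij}+12\alpha M$.

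The only genuine step is to trade the empirical $M$ for its population analogue $\max\{Ld_{ij},B\alpha\}$ demanded by the statement; this is where the second inequality of Lemma~\ref{lem:lemconc} enters. Multiplying $|\hat d_{ij}^2-d_{ij}^2|\le\max\{2(B/L)\alpha d_{ij},\,6(B/L)^2\alpha^2\}$ by $L^2$ and setting $a:=Ld_{ij}$, $b:=B\alpha$ gives $L^2\hat d_{ij}^2\le a^2+\max\{2ab,6b^2\}$. Bounding $a^2\le\max\{a,b\}^2$, $2ab\le 2\max\{a,b\}^2$ and $6b^2\le 6\max\{a,b\}^2$ collapses this to $L^2\hat d_{ij}^2\le 7\max\{a,b\}^2$, i.e. $L\hat d_{ij}\le\sqrt7\,\max\{Ld_{ij},B\alpha\}$, whence $M\le\sqrt7\,\max\{Ld_{ij},B\alpha\}$. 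Plugging into the upper bound gives $R_j-R_i\le\Delta_{ij}+12\sqrt7\,\alpha\max\{Ld_{ij},B\alpha\}\le\Delta_{ij}+32\alpha\max\{Ld_{ij},B\alpha\}$ since $12\sqrt7<32$, which is the claimed inequality. The main obstacle, such as it is, is exactly this empirical-to-population conversion of the $L_2$ distance; I handle both branches of the max at once through the estimate $a^2+\max\{2ab,6b^2\}\le 7\max\{a,b\}^2$, avoiding a case analysis.
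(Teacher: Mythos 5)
Your proof is correct and follows essentially the same route as the paper's: specialize the two relevant inequalities of Lemma~\ref{lem:lemconc} to the full-information case, absorb the empirical Bernstein term into $6\alpha\max\{L\hat d_{ij},B\alpha\}$ to get the two-sided bound, and then use the concentration of $\hat d_{ij}^2$ to replace the empirical distance by $\max\{Ld_{ij},B\alpha\}$. Your bookkeeping ($L\hat d_{ij}\le\sqrt7\max\{Ld_{ij},B\alpha\}$ and $12\sqrt7<32$) is in fact slightly cleaner than the paper's intermediate constants, but the argument is the same.
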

\begin{proof}
	In the full information case, since all experts are queried at each round we have $T_{ij}(T) = T_i(T)=T$
	and $\alpha_{ij}(T,\delta) = \alpha(T,\delta)=\alpha$ for all $i,j$. Applying
	Lemma~\ref{lem:lemconc} in that setting, using the first inequality we obtain that
	with probability at least $1-3\delta$:
	\[
	\Delta_{ij}  \leq \left(\hat{R}(i,T) - \hat{R}(j,T)\right) -  \sqrt{2}L\hat{d}_{ij}\alpha - 3B\alpha^2
	\leq  R_i - R_j,\]
	giving the first inequality in~\eqref{eq:deltacontrol2}; and
	\begin{align}
	R_i - R_j & \leq \left(\hat{R}(i,T) - \hat{R}(j,T)\right) +  \sqrt{2}L\hat{d}_{ij}\alpha + 3B\alpha^2
	\leq \Delta_{ij} + 9 \alpha L\hat d_{ij} + 9 B\alpha^2. \label{eq:upbdr}
	\end{align}
	From the second inequality in Lemma~\ref{lem:lemconc} we get, putting $\beta:= B/L$:
	\begin{align*}
	\hat{d}_{ij}^2 - d_{ij}^2
	& \le \max\left \lbrace 2\beta \alpha d_{ij}, 6\beta^2\alpha^2 \right \rbrace \\
	& \le \max\left \lbrace 6\beta^2 \alpha^2 + \frac{1}{6}d^2_{ij}, 6\beta^2\alpha^2 \right \rbrace \\
	& \le 6\beta^2 \alpha^2 + \frac{1}{6}d^2_{ij},
	\end{align*}
	from which we deduce $\hat{d}_{ij}^2 \leq 12 \alpha \max(\beta^2 \alpha^2,d^2_{ij})$.
	Taking square roots and plugging into~\eqref{eq:upbdr}, we obtain the claim.
\end{proof}

For $t \ge 1$, define: $\delta_t := \frac{\delta}{t(t+1)}$. 
Define the event $\mathcal{A}$:
\begin{subnumcases}{\label{eq:eventa} \left(\mathcal{A}\right):
	\forall t \ge 1, \forall~i,j \in \intr{K}: }
\left| \left(\hat{R}_{ij}(i,t) - \hat{R}_{ij}(j,t)\right) - \left(R_i - R_j\right) \right| \le 3 \max\left\lbrace L \hat{d}_{ij}~\alpha_{ij}(t, \delta_t) ;B \alpha^2_{ij}(t, \delta_t) \right \rbrace \label{conc1}\\
\left|\hat{R}_i(t) - R_i\right| \le 2B~\alpha_{i}(t, \delta_t) \label{conc2}\\
\hat{d}_{ij}^2 \le 12 \max\left\lbrace d_{ij}^2; \left(\frac{B}{L}\right)^2\alpha^2_{ij}(t, \delta_t) \right \rbrace \label{conc3}\\
d_{ij}^2 \le 12 \max\left\lbrace \hat{d}_{ij}^2; \left(\frac{B}{L}\right)^2\alpha^2_{ij}(t, \delta_t) \right \rbrace \label{conc4}
\end{subnumcases}
Using a union bound over $t\ge 1$ and $i,j \in \intr{K}$, we have: $\mathbb{P}\left(\mathcal{A}\right) \ge 1-4\delta$.

\section{Proof of Theorem~\ref{th:main0} and Corollary~\ref{th:main1}}

Let $t \ge 1$, denote by $S_t$ the set of non-eliminated experts in Algorithm~\ref{algo:bud} at round $t$. The lemma below shows that conditionally to event $\mathcal{A}$, the best experts $\mathcal{S}^*$ are never eliminated.

\begin{lemma}\label{lem:i*}
	If $\mathcal{A}$ defined in~\eqref{eq:eventa} holds, $ \forall t\ge 1$ we have: $\mathcal{S}^{*} \subseteq S_t$, where we recall $\mathcal{S}^{*} := \argmin_{i \in \intr{K}} R(F_i)$.
\end{lemma}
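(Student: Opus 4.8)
The plan is to show that an optimal expert can never fail a test, so that it survives every elimination step. Recall that in Algorithm~\ref{algo:bud} an expert $v$ is removed from $S$ at round $t$ precisely when $\Delta'_{uv}(t,\delta_t)>0$ for some competitor $u$. Hence it suffices to prove that, on the event $\mathcal{A}$, for every $i^*\in\mathcal{S}^*$, every $i\in\intr{K}$, and every round $t\ge 1$, one has $\Delta'_{i i^*}(t,\delta_t)\le 0$. Given this, since $i^*\in S_1=\intr{K}$ and no test can ever trigger its removal, a one-line induction on $t$ (if $i^*\in S_t$ then $i^*\in S_{t+1}$) yields $\mathcal{S}^*\subseteq S_t$ for all $t$.

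Fixing $i^*$, $i$ and $t$, I would first dispatch the degenerate case $T_{i i^*}(t)=0$: there $\alpha_{i i^*}(t,\delta_t)=+\infty$ by convention, so the penalty subtracted in $\Delta'_{i i^*}$ is infinite and the test cannot be triggered. Assume therefore $T_{i i^*}(t)\ge 1$. The core of the argument is the first concentration bound \eqref{conc1}, which holds uniformly on $\mathcal{A}$. Applied to the pair $(i,i^*)$ and rearranged, it gives
\[
\hat{R}_{i i^*}(i^*,t)-\hat{R}_{i i^*}(i,t)\ \le\ (R_{i^*}-R_i)\;+\;3\max\{L\,\hat{d}_{i i^*}(t)\,\alpha_{i i^*}(t,\delta_t),\,B\,\alpha^2_{i i^*}(t,\delta_t)\}.
\]
Since $i^*$ minimizes the risk, $R_{i^*}-R_i\le 0$, so the first term on the right may be dropped.

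The key bookkeeping observation is that the penalty in the definition of $\Delta'_{i i^*}$ factors exactly as the one appearing above, namely $\alpha_{i i^*}\max\{L\hat{d}_{i i^*},B\alpha_{i i^*}\}=\max\{L\hat{d}_{i i^*}\alpha_{i i^*},B\alpha^2_{i i^*}\}$. Substituting the displayed bound into the definition of $\Delta'_{i i^*}$ therefore yields
\[
\Delta'_{i i^*}(t,\delta_t)\ \le\ 3\max\{L\hat{d}_{i i^*}\alpha_{i i^*},B\alpha^2_{i i^*}\}-6\max\{L\hat{d}_{i i^*}\alpha_{i i^*},B\alpha^2_{i i^*}\}\ \le\ 0,
\]
as required. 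I do not expect any genuine obstacle here; the only point needing care is precisely this matching between the factor $6$ in $\Delta'$ and the factor $3$ coming from \eqref{conc1}, which is exactly why the definition carries the constant $6$. Uniformity over all rounds $t$ is not an issue because $\mathcal{A}$ is built with the deflated confidence levels $\delta_t=\delta/(t(t+1))$, so that \eqref{conc1} holds simultaneously for all $t\ge 1$; this is what lets the single event $\mathcal{A}$ support the induction over the entire (a priori unbounded) runtime of the algorithm.
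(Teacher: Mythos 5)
Your proof is correct and is essentially the paper's own argument: the paper supposes for contradiction that some $\Delta'_{ji^*}(s,\delta_s)>0$ and derives a violation of \eqref{conc1} using $R^*\le R_j$, which is exactly the contrapositive of your direct verification that $\Delta'_{ii^*}(t,\delta_t)\le 3\max\{\cdot\}-6\max\{\cdot\}\le 0$ on $\mathcal{A}$. The only (harmless) additions on your side are the explicit treatment of the $T_{ii^*}(t)=0$ case and the remark on uniformity over $t$, both of which the paper leaves implicit.
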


\begin{proof}
	Let $t\ge 1$, assume for the sake of contradiction that:  $i^* \in \mathcal{S}^*$ but $i^* \notin S_t$. Then, at some point, $i^*$ was eliminated by an expert $j$. More specifically: $\exists s \in \intr{t}$, $\exists j \in \intr{K} \setminus \left\lbrace i^* \right \rbrace$, such that $\Delta'_{ji^*}(t, \delta_t) > 0$.
	It follows by definition of $\Delta'_{ji^*}$ that:
	\begin{equation*}
	\hat{R}_{ji^*}(i^*, s) > \hat{R}_{ji^*}(j,s) + 6 \max \left\lbrace L \alpha_{ji^*}( s, \delta_s) \hat{d}_{ji^*}, B \alpha^2_{ji^*}( s, \delta_s) \right \rbrace
	\end{equation*} 
	which contradicts \eqref{conc1} since we have: $R^* \le R_j$.
\end{proof}
The lemma below gives a high probability deviation rate on the excess of any expert in $S_t$ when combined with an appropriate expert. Recall that for $i \in \intr{K}$: $R_i = R(F_i)$. 
\begin{lemma}\label{lem:conc}
	If event $\mathcal{A}$ defined in~\eqref{eq:eventa} holds, $\forall t \ge 1$,  for all $i \in S_t$, let $j \in \text{argmax}_{l \in S_t} \hat{d}_{il}(t)$, then we have:
	\begin{equation*}
	R\left( \frac{F_i + F_j}{2}\right) \le R^* +c~B \frac{\log(K\delta_t^{-1})}{T_{ij}(t)},                
	\end{equation*}
	where $c$ is an absolute constant.
\end{lemma}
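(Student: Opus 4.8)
The plan is to transcribe the argument of Theorem~\ref{thm:fullinf} into the two-point setting, the essential new difficulty being that distinct pairs are now sampled on distinct (and differently sized) subsets of rounds, so the uniform confidence width $\alpha$ must be replaced by the pair-dependent $\alpha_{uv}(t,\delta_t)$ and these must be shown to be comparable. Fix $t\ge 1$ and an optimal expert $i^*\in\mathcal{S}^*$; by Lemma~\ref{lem:i*} we have $i^*\in S_t$ under event $\mathcal{A}$, so $i,j,i^*$ all belong to $S_t$. Starting from strong convexity of $R(\cdot)$ in $L^2$ distance,
$$R\Big(\tfrac{F_i+F_j}{2}\Big)\le R^*+\tfrac12\big((R_i-R_{i^*})+(R_j-R_{i^*})\big)-\tfrac{\rho^2}{8}\,d_{ij}^2,$$
I would bound each risk gap exactly as before: since $i\in S_t$ the test gives $\Delta'_{i^*i}(t,\delta_t)\le 0$, and feeding this into the concentration bound~\eqref{conc1} yields $R_i-R_{i^*}\le 9\max\{L\hat d_{ii^*}\alpha_{ii^*},B\alpha_{ii^*}^2\}$, and symmetrically $R_j-R_{i^*}\le 9\max\{L\hat d_{i^*j}\alpha_{i^*j},B\alpha_{i^*j}^2\}$ from $j\in S_t$.

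The crux is to re-express these two bounds in terms of the single pair $(i,j)$ and the single width $\alpha_{ij}$. For the widths I would prove a count-balancing lemma for the querying rule of Algorithm~\ref{algo:bud}: because $i,j,i^*$ are non-eliminated at round $t$ and elimination is permanent, the three pairs $(i,i^*),(i^*,j),(i,j)$ are eligible throughout rounds $1,\dots,t$; since each round increments a least-queried eligible pair, no two such counts can ever differ by more than one, so $T_{ii^*}(t),T_{i^*j}(t)$ and $T_{ij}(t)$ agree up to $\pm1$ and hence $\alpha_{ii^*},\alpha_{i^*j}\le\sqrt2\,\alpha_{ij}$ as soon as $T_{ij}(t)\ge 2$ (when $T_{ij}(t)\le 1$ the stated bound is trivial, the excess risk being at most $B$). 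For the distances I would use $\hat d_{ii^*}(t)\le\hat d_{ij}(t)$, which is immediate from $j\in\argmax_{l\in S_t}\hat d_{il}(t)$. Here one must be careful: empirical distances of different pairs are computed on different samples, so the triangle inequality is not available for the $\hat d$'s and must instead be applied to the true distances, $d_{i^*j}\le d_{ii^*}+d_{ij}$; passing back and forth between $\hat d_{uv}$ and $d_{uv}$ through~\eqref{conc3}--\eqref{conc4} (each costs a factor $\sqrt{12}$ and an additive $(B/L)\alpha_{uv}$) then bounds $\hat d_{i^*j}$ by an absolute constant times $\max\{d_{ij},(B/L)\alpha_{ij}\}$.

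Combining the width and distance estimates, both risk gaps are at most an absolute constant times $\max\{L d_{ij}\alpha_{ij},B\alpha_{ij}^2\}$, so the convexity inequality becomes
$$R\Big(\tfrac{F_i+F_j}{2}\Big)\le R^*+c'L d_{ij}\alpha_{ij}+c'B\alpha_{ij}^2-\tfrac{\rho^2}{8}\,d_{ij}^2.$$
I would finish as in Theorem~\ref{thm:fullinf} by maximizing the right-hand side over $d_{ij}\ge 0$: the quadratic $-\tfrac{\rho^2}{8}d_{ij}^2+c'Ld_{ij}\alpha_{ij}$ is at most $\tfrac{2(c')^2L^2}{\rho^2}\alpha_{ij}^2$, and since $B=8L^2/\rho^2$ this is an absolute constant times $B\alpha_{ij}^2=cB\,\log(4K\delta_t^{-1})/T_{ij}(t)$, giving the claim after absorbing the constant $4$ into the logarithm. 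I expect the count-balancing step to be the only genuinely new obstacle; once the widths are comparable, the rest is the full-information computation with extra constant factors from the empirical-to-true distance conversions.
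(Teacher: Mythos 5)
Your proof is correct and follows the same skeleton as the paper's argument (strong convexity of $R$, risk-gap bounds extracted from the surviving elimination tests, concentration, and maximization of the resulting quadratic in $d_{ij}$), but it makes one different choice. To control $R_j-R^*$ you invoke the test $\Delta'_{i^*j}\le 0$ on the pair $(i^*,j)$, which forces you to relate $\hat d_{i^*j}$ to $\hat d_{ij}$; since these empirical distances are computed on different sample sets, you must detour through the true-distance triangle inequality and the conversions \eqref{conc3}--\eqref{conc4}. The paper instead uses the test $\Delta'_{ij}\le 0$ on the pair $(i,j)$ itself to get $R_j\le R_i+9\max\{L\alpha_{ij}\hat d_{ij},B\alpha_{ij}^2\}$ directly, so the only ``foreign'' pair is $(i,i^*)$, whose empirical distance is dispatched by the one-line inequality $\hat d_{ii^*}\le\hat d_{ij}$ coming from the argmax definition of $j$; this avoids your detour entirely and keeps the constants smaller. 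On the other hand, your explicit count-balancing observation (pairs of experts that both survive to round $t$ have query counts within one of each other, hence $\alpha_{ii^*}\le\sqrt2\,\alpha_{ij}$ once $T_{ij}(t)\ge 2$) makes rigorous a step the paper performs silently when it replaces $\alpha_{ii^*}$ by $\alpha_{ij}$ in the bound on $R_i-R_{i^*}$, so in that respect your write-up is more complete. Both routes terminate in the same quadratic $-\tfrac{\rho^2}{8}d_{ij}^2+c'Ld_{ij}\alpha_{ij}+c'B\alpha_{ij}^2$ and the same $cB\alpha_{ij}^2$ conclusion.
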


\begin{proof}
	Suppose that $\mathcal{A}$ is true. Let $t\ge 1$, $i \in S_t$ and $i^* \in \mathcal{S}^*$. Let $j \in \text{argmax}_{S_t} \hat{d}_{il}$.

	Lemma~\ref{lem:i*} shows that : $i^* \in S_{t}$, we therefore have by construction of Algorithm~\ref{algo:bud}:
	\begin{align*}
	\hat{R}_{ij}(j, t)  &\le \hat{R}_{ij}(i, t) + 6 \max\left \lbrace L \alpha_{ij}(t, \delta_{t}) \hat{d}_{ij}(t), B \alpha^2_{ij}(t, \delta_{t}) \right \rbrace\\
	\hat{R}_{ii^*}(i, t)  &\le \hat{R}_{ii^*}({i^*}, t) + 6 \max\left \lbrace L \alpha_{ii^*}(t, \delta_{t}) \hat{d}_{ii^*}(t), B \alpha^2_{ii^*}(t, \delta_{t}) \right \rbrace.
	\end{align*} 
	Using inequalities \eqref{conc1} for $(i,j)$ and $(i, i^*)$ respectively and $\hat{d}_{ii^*}(t) \le \hat{d}_{ij}(t)$, we have:
	\begin{align}
	R_j  &\le R_i + 9 \max\left \lbrace L \alpha_{ij}(t, \delta_{t}) \hat{d}_{ij}(t), B \alpha^2_{ij}(t, \delta_{t}) \right \rbrace \label{eq:Rj}\\
	R_i  &\le R_{i^*} + 9 \max\left \lbrace L \alpha_{ii^*}(t, \delta_{t}) \hat{d}_{ij}(t), B \alpha^2_{ii^*}(t, \delta_{t}) \right \rbrace \label{eq:Ri}.
	\end{align} 
	We have:
	\begin{align*}
	R\left( \frac{F_i + F_j}{2}\right) &\le \frac{1}{2}\left( R_i - \frac{\rho^2}{2} \mathbb{E}\left[F_i^2\right] \right) + \frac{1}{2} \left(R_j -\frac{\rho^2}{2} \mathbb{E}\left[F_j^2\right]\right)+\frac{\rho^2}{2}\mathbb{E}\left[\left(\frac{F_i+F_j}{2}\right)^2\right]\\
	&= \frac{1}{2} R_i +  \frac{1}{2} R_j - \frac{\rho^2}{8} \left( 2\mathbb{E}\left[F_i^2\right] +2\mathbb{E}\left[F_j^2\right] - \mathbb{E}[ \left(F_i + F_j\right)^2] \right)\\
	&= \frac{1}{2} R_i +  \frac{1}{2} R_j - \frac{\rho^2}{8} d_{ij}^2\\
	&\le \frac{1}{2} R_i + \frac{1}{2} R_i + \frac{9}{2} \max\left \lbrace L \alpha_{ij}(t, \delta_{t}) \hat{d}_{ij}(t), B \alpha^2_{ij}(t, \delta_{t}) \right \rbrace  - \frac{\rho^2}{8} d_{ij}^2 \\
	&= R_i + \frac{9}{2} \max\left \lbrace L\alpha_{ij}(t, \delta_{t}) \hat{d}_{ij}(t), B \alpha^2_{ij}(t, \delta_{t}) \right \rbrace - \frac{\rho^2}{8} d_{ij}^2 \\
	&\le R^* + \frac{27}{2} \max\left \lbrace L \alpha_{ij}(t, \delta_{t}) \hat{d}_{ij}(t), B \alpha^2_{ij}(t, \delta_{t}) \right \rbrace - \frac{\rho^2}{8} d_{ij}^2.
	\end{align*}
	We used the strong convexity of $R$ in the first inequality and we injected \eqref{eq:Rj} to bound $R(F_j)$ in the fourth line and \eqref{eq:Ri} to bound $R(F_i)$ in the  last line.
	Now we use inequality \eqref{conc2} for $(i,j)$ and obtain:
	\begin{align*}
	R\left( \frac{F_i + F_j}{2}\right) - R^* &\le 162 \max\left \lbrace L \alpha_{ij}(t, \delta_{t}) d_{ij}, B \alpha^2_{ij}(t, \delta_{t}) \right \rbrace - \frac{\rho^2}{8} d_{ij}^2 \\
	&\le c~B\alpha^2_{ij}(t, \delta_{t})\\
	&\le c~B\alpha^2_{ij}(t, \delta_t),
	\end{align*}
	where $c$ is an absolute constant. In the final step, we upper bounded the right-hand-side of the first inequality with a parabolic function in $d_{ij}$, then we replaced $d_{ij}$ with the expression achieving the maximum (recall that $B:= 8(L/\rho)^2$). 
	
\end{proof}
\paragraph{Proof of Theorem~\ref{th:main0}.}

Let $T \ge 2K^2$, when Algorithm~\ref{algo:bud} is halted at $T$. Let $\hat{k} \in S_T$ and $\hat{l} \in \text{argmax}_{j \in S_T} \hat{d}_{\hat{k}j}(T)$. 

Let $\hat{q}$ denote the empirical risk minimizer on $S_T$:
\begin{equation*}
\hat{q} \in \argmin_{j \in S_T } \hat{R}_j(T).
\end{equation*} 

We consider two cases. If  $T_{\hat{k}\hat{l}}(T) > \sqrt{T_{\hat{q}}(T)\log\left(K\delta^{-1}_T\right)}$,
then the output of Algorithm~\ref{algo:bud} is $\frac{F_{\hat{k}}+F_{\hat{l}}}{2}$ and we can apply
the bound of Lemma~\ref{lem:conc}.

If  $T_{\hat{k}\hat{l}}(T) \le \sqrt{T_{\hat{q}}(T)\log\left(K\delta^{-1}_T\right)}$,
then the output of Algorithm~\ref{algo:bud} is $F_{\hat{q}}$.  We have:
\begin{align*}
R_{\hat{q}} - R_{i^*} &= R_{\hat{q}} - \hat{R}_{\hat{q}}(T) + \hat{R}_{\hat{q}}(T) - \hat{R}_{i^*}(T) + \hat{R}_{i^*}(T) - R_{i^*}\\
&\le 2B\sqrt{\frac{\log\left(K\delta_T^{-1}\right)}{T_{\hat{q}}(T)}}+ 2B\sqrt{\frac{\log\left(K\delta_T^{-1}\right)}{T_{i^*}(T)}}\\
&\le 2B\sqrt{\frac{\log\left(K\delta_T^{-1}\right)}{T_{\hat{q}}(T)}}+ 2B\sqrt{\frac{\log\left(K\delta_T^{-1}\right)}{T_{\hat{q}}(T)-K}}\\
&\le 5B\sqrt{\frac{\log\left(K\delta_T^{-1}\right)}{T_{\hat{q}}(T)}},
\end{align*}
where we used inequalities \eqref{conc3} for $\hat{q}$ and $i^*$, and the fact that the allocation strategy leads to $\left| T_{i^*}(T) - T_{\hat{q}}(T)\right| \le K$ and $T_i(T) > 2K$ for all $i$. 


As a conclusion we have:
\begin{equation}\label{eq:res_f}
R(\hat{g}) - R_{i^*} \le c~B \min\left\lbrace \frac{\log(KT\delta^{-1})}{T_{\hat{k}\hat{l}}(T)}; \sqrt{\frac{\log(KT\delta^{-1})}{ T_{\hat{q}}(T)}} \right\rbrace,
\end{equation}
where $c$ is an absolute constant.


\section{Proof of Theorem~\ref{th:main2}}\label{proof:thm2}
In this section, we prove instance dependent bounds on the number of rounds required to achieve a risk at least as good as the best expert up to $\epsilon > 0$.

The following lemma gives an instance dependent upper and lower bound on the quantities $T_{ij}(t)$, for $i,j \in \intr{K}$.
\begin{lemma}\label{lem:1}
	Let $i,j \in \intr{K}$ such that $R_i \neq R_j$. If $\mathcal{A}$ holds, for all $t \ge 1$,
	if
	\begin{equation*}
	T_{ij}(t) \ge 289 \log\left(K \delta_t^{-1}\right) \max\left\lbrace \frac{L^2d_{ij}^2}{\left|R_i - R_j\right|^2}; \frac{B}{\left|R_i - R_j\right|}\right \rbrace,
	\end{equation*}
	then we have either $\Delta'_{ij} > 0$ or $\Delta'_{ji} > 0$.
	
	Furthermore, if
	\begin{equation*}
	T_{ij}(t) \le  3 \log\left(K \delta_t^{-1}\right) \max\left\lbrace \frac{L^2d_{ij}^2}{\left|R_i - R_j\right|^2}; \frac{B}{\left|R_i - R_j\right|} \right\rbrace,
	\end{equation*}
	then we have $\Delta'_{ij} \le 0$ and $\Delta'_{ji} \le 0$.
\end{lemma}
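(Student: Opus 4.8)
The plan is to run the whole argument on the event $\mathcal{A}$ of~\eqref{eq:eventa} (probability at least $1-4\delta$) and to reduce both halves to a single scalar comparison between the penalty $P:=\max\{L\hat d_{ij}(t)\,\alpha_{ij}(t,\delta_t),\,B\,\alpha_{ij}^2(t,\delta_t)\}$ appearing in $\Delta'_{ij}$ and the true risk gap $|R_i-R_j|$. Assume without loss of generality that $R_i<R_j$, so that the test that ought to fire is $\Delta'_{ij}>0$ (eliminating the worse expert $j$); since $\hat d_{ij}$ and $\alpha_{ij}$ are symmetric in $i,j$, the statistics $\Delta'_{ij}$ and $\Delta'_{ji}$ share the same penalty $P$ and differ only by the sign of their leading term. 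Feeding the loss-difference bound~\eqref{conc1} into the definition of $\Delta'_{ij}=[\hat R_{ij}(j,t)-\hat R_{ij}(i,t)]-6P$ gives, on $\mathcal{A}$,
\begin{equation*}
(R_j-R_i)-9P\;\le\;\Delta'_{ij}\;\le\;(R_j-R_i)-3P,
\qquad
\Delta'_{ji}\;\le\;(R_i-R_j)-3P .
\end{equation*}

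For the first assertion I would lower-bound $\Delta'_{ij}$, so I need an \emph{upper} bound on $P$: inequality~\eqref{conc3} gives $\hat d_{ij}(t)\le\sqrt{12}\,\max\{d_{ij},(B/L)\alpha\}$ (writing $\alpha:=\alpha_{ij}(t,\delta_t)$), hence $P\le\sqrt{12}\,\max\{Ld_{ij}\alpha,\,B\alpha^2\}$. Since $\alpha^2=\log(4K\delta_t^{-1})/T_{ij}(t)$, the hypothesis $T_{ij}(t)\ge 289\log(K\delta_t^{-1})\max\{L^2d_{ij}^2/|R_i-R_j|^2,\,B/|R_i-R_j|\}$ makes $\alpha^2$ small; using the first term of the maximum to control $Ld_{ij}\alpha$ and the second to control $B\alpha^2$, each is driven below a small fraction of $R_j-R_i$, so that $9P<R_j-R_i$ and therefore $\Delta'_{ij}>0$. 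The absolute constant $289$ is obtained by carrying the factor $9$ (from~\eqref{conc1} together with the $6$ in the definition of $\Delta'_{ij}$) and the factor $\sqrt{12}$ (from~\eqref{conc3}) through this computation, together with the passage from $\log(4K\delta_t^{-1})$ to $\log(K\delta_t^{-1})$.

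For the second assertion, the third inequality above already yields $\Delta'_{ji}\le (R_i-R_j)-3P\le 0$ for free, since $R_i-R_j<0$ and $P\ge0$; so only $\Delta'_{ij}\le0$ must be shown, and this time I need a \emph{lower} bound on $P$, supplied by the reverse distance comparison~\eqref{conc4}, $d_{ij}^2\le 12\max\{\hat d_{ij}^2,(B/L)^2\alpha^2\}$, i.e. $P\ge\tfrac{1}{\sqrt{12}}\max\{Ld_{ij}\alpha,\,B\alpha^2\}$. Here the hypothesis $T_{ij}(t)\le 3\log(K\delta_t^{-1})\max\{\cdots\}$ keeps $\alpha^2$ large, which forces $3P\ge R_j-R_i$ and hence $\Delta'_{ij}\le0$. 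The edge case $T_{ij}(t)=0$ is trivial, since then $\alpha_{ij}=\infty$, $P=\infty$, and neither test fires.

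The crux of the calculation — and the reason a \emph{single} threshold $\max\{L^2d_{ij}^2/|R_i-R_j|^2,\,B/|R_i-R_j|\}$ governs both directions — is that this quantity is, up to the logarithmic factor, precisely the number of joint samples at which the penalty $\max\{Ld_{ij}\alpha,B\alpha^2\}$ crosses the gap $|R_i-R_j|$. The main care I expect to need is that the two pieces of the penalty scale differently in $T_{ij}(t)$ (the Lipschitz term $Ld_{ij}\alpha$ like $T_{ij}^{-1/2}$, the range term $B\alpha^2$ like $T_{ij}^{-1}$), so one must split on which piece dominates and verify that the stated threshold simultaneously controls both regimes; propagating the constant-factor slack from the empirical-to-true distance conversions~\eqref{conc3}--\eqref{conc4} through this split is what pins down the constants $289$ and $3$.
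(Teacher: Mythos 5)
Your proof follows essentially the same route as the paper's: convert the threshold on $T_{ij}(t)$ into a bound on $\alpha_{ij}(t,\delta_t)$, use \eqref{conc1} to sandwich $\Delta'_{ij}$ around $(R_j-R_i)$ minus a multiple of the penalty, and use \eqref{conc3}/\eqref{conc4} to pass between the empirical and population versions of that penalty in the appropriate direction for each claim. The only caveat — which the paper's own proof shares — is that the explicit constants $289$ and $3$ do not quite make the chain of inequalities close as written (the factors $9$, $\sqrt{12}$ and the $\log(4K\delta_t^{-1})$ versus $\log(K\delta_t^{-1})$ conversion multiply to slightly more than $17$), but this affects only the numerical constants, not the structure of the argument.
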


\begin{proof}
	We start by proving the first claim of the lemma. Let $i,j \in \intr{K}$ and $t \ge 1$ such that:
	\begin{equation}\label{eq:prlem1}
	T_{ij}(t) \ge 289 \log\left(K \delta_t^{-1}\right) \max\left\lbrace \frac{L^2d_{ij}^2}{\left|R_i - R_j\right|^2}; \frac{B}{\left|R_i - R_j\right|}\right \rbrace.
	\end{equation}
	Inequality \eqref{eq:prlem1} implies:
	\begin{equation*}
	\alpha_{ij}\left(t, \delta_t \right) \le \frac{1}{17} \min \left\lbrace \frac{\left|R_i - R_j\right|}{Ld_{ij}}; \sqrt{\frac{\left|R_i - R_j\right|}{B}} \right \rbrace.
	\end{equation*}
	By simple calculus, we see that:
	\begin{equation*}
	17~ \max\left\lbrace L \alpha_{ij}\left(t, \delta_t \right) d_{ij}~;~ B\alpha^2_{ij}\left(t, \delta_t \right) \right \rbrace \le \left|R_i - R_j\right|. 
	\end{equation*}
	
	Now we use inequality \eqref{conc1} from event $\mathcal{A}$ to upper bound $\left|R_i - R_j\right|$:
	\begin{equation}\label{eq:prlem2}
	17~ \max\left\lbrace L \alpha_{ij}\left(t, \delta_t \right) d_{ij};B \alpha^2_{ij}\left(t, \delta_t \right) \right \rbrace \le \left|\hat{R}_{ij}(i,t) - \hat{R}_{ij}(j,t)\right| + 3 \max\left \lbrace L \alpha_{ij}\left(t, \delta_t \right)\hat{d}_{ij}(t); B\alpha^2_{ij}\left(t, \delta_t \right) \right \rbrace.
	\end{equation}
	Using inequality \eqref{conc2}, we have: 
	\begin{equation*}
	\max \left \lbrace \hat{d}_{ij}(t); \frac{B}{L}\alpha_{ij}\left(t, \delta_t \right) \right \rbrace \le 2\sqrt{3}\max \left \lbrace d_{ij};\frac{B}{L} \alpha_{ij}\left(t, \delta_t \right) \right \rbrace. 
	\end{equation*}
	
	We plug in the inequality above in \eqref{eq:prlem2} and obtain:

	\begin{equation*}
	6 \max\left\lbrace L \alpha_{ij}\left(t, \delta_t \right) \hat{d}_{ij}(t); B\alpha^2_{ij}\left(t, \delta_t \right) \right \rbrace < \left| \hat{R}_{ij}(i,t) - \hat{R}_{ij}(j,t)\right|,
	\end{equation*}	
	implying that we have either $\Delta'_{ij}(t) > 0$ or $\Delta'_{ji}(t) > 0$.
	
	For the second claim, Let $i,j \in \intr{K}$ and $t \in \intr{T}$ such that:
	\begin{equation}\label{upper_t_ij}
	T_{ij}(t) \le 3 \log\left(K\delta_t^{-1}\right) \max\left\lbrace \frac{L^2d_{ij}^2}{\left|R_i - R_j\right|^2}; \frac{B}{\left|R_i - R_j\right|} \right\rbrace.
	\end{equation}
	If $T_{ij}(t) = 0$, then $\Delta'_{ij} = \Delta'_{ji} = - \infty$. 
	
	Otherwise, inequality \eqref{upper_t_ij} implies that:
	\begin{equation*}
	\left|R_i - R_j\right| \le 3 \max\left \lbrace L \alpha_{ij}\left(t, \delta_t \right) d_{ij}~;~B \alpha^2_{ij}\left(t, \delta_t \right) \right \rbrace. 
	\end{equation*} 
	Now we use inequality \eqref{conc1} from event $\mathcal{A}$ to lower bound $\left|R_i - R_j\right|$. We have:
	\begin{equation*}
	\left|\hat{R}_{ij}(i,t) - \hat{R}_{ij}(j,t)\right| - 3 \max\left \lbrace L \alpha_{ij}\left(t, \delta_t \right) \hat{d}_{ij}(t)~;~B \alpha^2_{ij}\left(t, \delta_t \right) \right \rbrace \le 3 \max\left \lbrace L \alpha_{ij}\left(t, \delta_t \right) d_{ij}~;~B \alpha^2_{ij}\left(t, \delta_t \right) \right \rbrace.
	\end{equation*}
	
	We plug in inequality \eqref{conc4} to upper bound $d_{ij}$. We conclude that:
	\begin{equation*}
	\left| \hat{R}_{ij}(i,t) - \hat{R}_{ij}(j,t)\right| \le 6 \max\left\lbrace L \alpha_{ij}\left(t, \delta_t \right) \hat{d}_{ij}(t);B \alpha^2_{ij}\left(t, \delta_t \right) \right \rbrace,
	\end{equation*}
	implying that we have: $\Delta'_{ij}(t) \le 0$ and $\Delta'_{ji}(t) \le 0$.
\end{proof}
Now we turn to the proof of Theorem~\ref{th:main2}. Recall the following notations: for $i \in \intr{K}$ define:
\begin{equation*}
\Lambda_i := \min_{i^* \in \mathcal{S}^*}\max\left\lbrace \frac{L^2d^2_{ii^*}}{\left|R_i - R_{i^*}\right|^2}; \frac{B}{R_i - R_{i^*}} \right \rbrace.
\end{equation*}
Denote the corresponding reordered values: 
\begin{equation*}
\Lambda_{(1)}  \le \Lambda_{(2)} \le \dots \le \Lambda_{(K)} = +\infty,
\end{equation*}  
and $\Lambda^* := \min \{ \Lambda_i; \Lambda_i <+\infty\}$.

\paragraph{Proof of Theorem~\ref{th:main2}.}
By Lemma~\ref{lem:conc}, in order to show that $R(\hat{g}) \le R^* +cB\epsilon$, it suffices to prove that for any $i,j \in S_T$, it holds $T_{ij}(T) \ge B\log(K\delta_T^{-1})/\epsilon$.  

Let $\epsilon > 0$, define the following sequences, for $N \in \intr{K-1}$:
$$
\left\{
\begin{array}{ll}
\phi_N &:= 289(K-N)^2\left( \Lambda_{(N)} - \Lambda_{(N-1)}\right) \log\left(\delta^{-1}C_{\epsilon}\right);\\
\tau_N &:= \sum_{k=1}^{N} \phi_k,
\end{array}
\right.
$$
where we define $\Lambda_{(0)} = 0$ and
\begin{equation*}
	C_{\epsilon} := K\sum_{i \in \mathcal{S}_{\epsilon}^c}\Lambda_{i} + 2 \left|\mathcal{S}_{\epsilon}\right|^2 \min\left\lbrace \frac{1}{\epsilon}, \Lambda^* \right \rbrace.
\end{equation*} 

\begin{claim}\label{cl:elim}
	If event $\mathcal{A}$ holds, for any $N \in \intr{K}$ after round $\ceil{\tau_N}$, all experts $i$ satisfying $\Lambda_{i} \le \Lambda_{(N)}$ are necessarily eliminated. 
\end{claim}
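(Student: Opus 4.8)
The plan is to reduce the statement, via Lemmas~\ref{lem:i*} and~\ref{lem:1}, to a purely combinatorial fact about how fast the least-sampled-pair rule accumulates joint observations, and then argue by induction on $N$; throughout set $\ell := \log(\delta^{-1}C_{\epsilon})$ and work on the event $\mathcal{A}$. Fix a sub-optimal expert $i$ with $\Lambda_i<+\infty$ and let $i^{*}\in\mathcal{S}^{*}$ attain the minimum defining $\Lambda_i$, so that $\max\{L^2 d_{ii^{*}}^2/|R_i-R_{i^{*}}|^2,\,B/(R_i-R_{i^{*}})\}=\Lambda_i$. By the first half of Lemma~\ref{lem:1}, once $T_{ii^{*}}(t)\ge 289\log(K\delta_t^{-1})\,\Lambda_i$ we have $\Delta'_{ii^{*}}>0$ or $\Delta'_{i^{*}i}>0$; since Lemma~\ref{lem:i*} forbids $i^{*}$ from ever being eliminated on $\mathcal{A}$, it is necessarily $i$ that is discarded. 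Hence it suffices to show that for every $i$ with $\Lambda_i\le\Lambda_{(N)}$ the joint count of $(i,i^{*})$ reaches this threshold by round $\ceil{\tau_N}$, as long as $i$ is still alive so that the pair is still queried.

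\textbf{Balance of the allocation.} The selection rule $(i,j)\in\argmin_{(u,v)\in S\times S}T_{uv}$ keeps the counts of all currently surviving pairs within $1$ of one another: the invariant $\max_{(u,v)\in S_t^2}T_{uv}(t)-\min_{(u,v)\in S_t^2}T_{uv}(t)\le 1$ holds at $t=0$ and is preserved, since each round increments a minimal pair while an elimination only deletes pairs from a family of spread $\le 1$. Writing $n(t)$ for the minimal surviving count, $n(t)$ is non-decreasing across eliminations, every surviving pair satisfies $T_{uv}(t)\ge n(t)$, and pushing $n$ up by one unit costs at most (number of surviving pairs) rounds.

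\textbf{Phase induction.} I would prove the strengthened assertion that on $\mathcal{A}$, after round $\ceil{\tau_N}$ every expert with $\Lambda_i\le\Lambda_{(N)}$ is eliminated \emph{and} $n(\ceil{\tau_N})$ is large enough to certify this. The base case $N=0$ is vacuous since $\Lambda_{(0)}=0$. For the step, the inductive hypothesis leaves only experts of rank $\ge N$ alive at round $\ceil{\tau_{N-1}}$, so $|S|\le K-N+1$ and the number of surviving pairs is at most $\binom{K-N+1}{2}\le(K-N)^2$ (this is where $N\le K-1$ enters). By the balance property, the $\phi_N=289(K-N)^2(\Lambda_{(N)}-\Lambda_{(N-1)})\ell$ rounds added in phase $N$ raise $n$ by at least $289\,\ell\,(\Lambda_{(N)}-\Lambda_{(N-1)})$; together with the hypothesis on $n(\ceil{\tau_{N-1}})$ this brings the minimal surviving count past the elimination threshold for level $\Lambda_{(N)}$, and the reduction step then forces every still-alive expert with $\Lambda_i\le\Lambda_{(N)}$ out, closing the induction.

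\textbf{Main obstacle.} The genuine difficulty is that the round index $t$ enters the elimination condition twice: through the accumulated count (by the balance property $n(t)$ grows like $t$ divided by the number of surviving pairs) and through the time-uniform confidence level $\log(K\delta_t^{-1})=\log(Kt(t+1)/\delta)$ appearing in the threshold. ``Expert $i$ is eliminated'' is thus a self-referential inequality of the shape $t/(\text{pairs})\gtrsim \Lambda_i\log(Kt^2/\delta)$; solving it for $t$ and certifying that the solution stays below the clean quantity $\tau_N$ — with $\log(K\delta_t^{-1})$ absorbed into $\ell$ and the absolute constants — is precisely the purpose of the technical Lemma~\ref{cl:calpure}. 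Everything else is the phase bookkeeping above and the elementary pair-count bound $\binom{K-N+1}{2}\le(K-N)^2$.
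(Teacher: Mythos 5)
Your proposal is correct and follows essentially the same route as the paper's proof: an induction over the phases $[\tau_{N-1},\tau_N]$, bounding the number of surviving experts by $K-N+1$ via the inductive hypothesis, using the least-sampled-pair rule to convert the $\phi_N$ rounds of phase $N$ into at least $289\,\ell\,(\Lambda_{(N)}-\Lambda_{(N-1)})$ additional joint queries per surviving pair, telescoping to get $T_{ii^*}\ge 289\,\ell\,\Lambda_{(N)}$, and invoking Lemma~\ref{lem:1} together with Lemma~\ref{lem:i*} to force the elimination of $i$. Your explicit spread-$\le 1$ balance invariant merely makes precise what the paper asserts when it writes $T_{ji^*}(\tau_{N+1})\ge 2\sum_{i}(\tau_{i+1}-\tau_i)/((K-i)(K-i+1))$, and the $\log(K\delta_t^{-1})$ versus $\log(\delta^{-1}C_\epsilon)$ mismatch you flag is handled (somewhat implicitly) by the paper as well, via the factor $578$ and Lemma~\ref{cl:calpure}.
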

\begin{proof}
	Recall that the number of queries required to eliminate an expert $i\in \intr{K}$ is upper bounded by the number of data points needed to have: $\Delta_{i^*i}>0$ for any $i^* \in \mathcal{S}^*$, which would lead to the elimination of $i$ by $i^*$.

	Let $i^*$ be an arbitrary element of $\mathcal{S}^*$. We use an induction argument, for $N=1$ the claim is a direct consequence of the definition of $\tau_1$ and Lemma~\ref{lem:1}. Let $N < K$ and suppose that the claim is valid for all $i\le N$. Let $j$ denote an expert such that $\Lambda_j = \Lambda_{(N+1)}$ and $j$ was not eliminated before $\ceil{\tau_{N}}$. For $i\le N$, the induction hypothesis suggests that between round $\ceil{\tau_{i}}$ and $\ceil{\tau_{i+1}}$ there was at most $K-i$ non-eliminated experts.
	Since the allocation strategy is uniform over the pairs of experts in $S \times S$, we have:
	\begin{equation}\label{eq:boundtij}
	T_{ji^*}(\tau_{N+1}) \ge 2\sum_{i=0}^{N}\frac{\tau_{i+1} - \tau_{i}}{(K-i)(K-i+1)},
	\end{equation}
	where $\tau_0 = 0$.
	Recall that the definition of $\tau_{i}$ implies that: 
	\begin{equation}\label{eq:boundlambda}
	\tau_{i+1} - \tau_i = 289 (K-i-1)^2 \log\left(C_{\epsilon} \delta^{-1}\right) \left(\Lambda_{(i+1)} - \Lambda_{(i)}\right).
	\end{equation}
	We plug in the lower bound given in \eqref{eq:boundlambda} into \eqref{eq:boundtij} to obtain:
	\begin{equation*}
	T_{ji^*}(\tau_{N+1}) \ge 289 \log\left(C_{\epsilon} \delta^{-1}\right) \Lambda_{(N+1)}.
	\end{equation*}
	Using Lemma~\ref{lem:1} we conclude that expert $j$ is eliminated before round $\tau_{N+1}$, which completes the induction argument.
	
\end{proof}

\begin{claim}\label{cl:bound_tn}
	We have for any $N \in \intr{K}$:
	
	\begin{equation*}
	\tau_{N} = 289 \log\left(C_{\epsilon}\delta^{-1} \right) \left( \sum_{i=1}^{N-1} (2(K-i) +1) \Lambda_{(i)} + (K-N)^2 \Lambda_{(N)} \right). 
	\end{equation*}
\end{claim}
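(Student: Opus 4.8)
The plan is to treat this as a purely algebraic identity and prove it by summation by parts (Abel summation), with a short induction on $N$ as an equally clean alternative. Unwinding the definition $\tau_N = \sum_{k=1}^{N}\phi_k$ and factoring out the common constant $289\log(C_\epsilon\delta^{-1})$, the statement reduces to
\[
\sum_{k=1}^{N}(K-k)^2\bigl(\Lambda_{(k)} - \Lambda_{(k-1)}\bigr) = \sum_{i=1}^{N-1}\bigl(2(K-i)+1\bigr)\Lambda_{(i)} + (K-N)^2\Lambda_{(N)},
\]
using the convention $\Lambda_{(0)} = 0$.

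First I would rewrite the left-hand side by collecting, for each index $i$, the total coefficient multiplying $\Lambda_{(i)}$. For $1 \le i \le N-1$, the value $\Lambda_{(i)}$ enters the summand $k=i$ with weight $+(K-i)^2$ and the summand $k=i+1$ with weight $-(K-i-1)^2$; the top value $\Lambda_{(N)}$ appears only in $k=N$ with weight $(K-N)^2$, while $\Lambda_{(0)}$ drops out because it is zero. Thus the interior coefficient is a difference of consecutive squares, which I would evaluate by the elementary factorization $u^2-(u-1)^2 = 2u-1$ with $u=K-i$:
\[
(K-i)^2 - (K-i-1)^2 = 2(K-i)-1.
\]
Substituting this back and keeping the two boundary terms gives the claimed closed form.

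As a robust alternative I would run an induction on $N$. The base case $N=1$ is immediate: the interior sum is empty and $\tau_1 = \phi_1 = 289\log(C_\epsilon\delta^{-1})(K-1)^2\Lambda_{(1)}$. For the inductive step, adding $\phi_{N+1} = 289(K-N-1)^2\bigl(\Lambda_{(N+1)} - \Lambda_{(N)}\bigr)\log(C_\epsilon\delta^{-1})$ updates the coefficient of $\Lambda_{(N)}$ from $(K-N)^2$ to $(K-N)^2-(K-N-1)^2$ and creates a fresh top term $(K-N-1)^2\Lambda_{(N+1)} = (K-(N+1))^2\Lambda_{(N+1)}$, exactly matching the stated form at level $N+1$.

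The main point to watch is not any genuine difficulty but the index bookkeeping: correctly attributing the two contributions to each interior $\Lambda_{(i)}$ and handling the endpoints $\Lambda_{(0)}=0$ and $\Lambda_{(N)}$. I would also flag that the difference of consecutive squares evaluates to $2(K-i)-1$; should the target coefficient be written as $2(K-i)+1$, the gap is a harmless lower-order constant that only loosens the subsequent bound on $C_\epsilon$ and plays no role in the rate.
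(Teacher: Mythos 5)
Your proof is correct and follows essentially the same route as the paper's (a telescoping/Abel summation of $\sum_k (K-k)^2(\Lambda_{(k)}-\Lambda_{(k-1)})$, with the induction as a cross-check). You are also right to flag the coefficient: the difference of consecutive squares is $(K-i)^2-(K-i-1)^2 = 2(K-i)-1$, so the claim as printed (and the last line of the paper's own computation) carries a sign typo, $2(K-i)+1$ in place of $2(K-i)-1$; as you observe, this is immaterial downstream since the proof of Theorem~\ref{th:main2} only uses the crude bound $2(K-i)\pm 1 \le 2K$ to compare $\tau_{N_\epsilon}$ with $578\,C_\epsilon\log(C_\epsilon\delta^{-1})$.
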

\begin{proof}
	We have by definition of $\tau_{N}$:
	\begin{align*}
	\tau_{N} &= \sum_{i=1}^{N} \phi_i\\
	&= \sum_{i=1}^{N} 289(K-i)^2\left( \Lambda_{(i)} - \Lambda_{(i-1)}\right) \log\left(\delta^{-1}C_{\epsilon}\right)\\
	&= \sum_{i=1}^{N} 289(K-i)^2 \Lambda_{(i)}  \log\left(\delta^{-1}C_{\epsilon}\right) - \sum_{i=1}^{N} 289(K-i)^2 \Lambda_{(i-1)}  \log\left(\delta^{-1}C_{\epsilon}\right) \\
	&= 289 \log\left(\delta^{-1}C_{\epsilon}\right) \left( \sum_{i=1}^{N-1} (2(K-i) +1) \Lambda_{(i)} + (K-N)^2 \Lambda_{(N)} \right).
	\end{align*}
\end{proof}

\paragraph{Conclusion:}

Let $N_{\epsilon}$ denote the integer satisfying (we do not consider the trivial case where all the expert have the same risk):

\begin{equation*}
\Lambda_{(N_{\epsilon})} < \frac{1}{\epsilon} < \Lambda_{(N_{\epsilon}+1)}. 
\end{equation*}

Recall that we suppose that $T$ satisfies:
\begin{equation*}
T \ge 578 C_{\epsilon} \log(C_{\epsilon}\delta^{-1}).
\end{equation*}

Observe that (using Claim~\ref{cl:bound_tn}):
\begin{align}
T &\ge \tau_{N_{\epsilon}}  + 289 \log(C_{\epsilon}\delta^{-1}) \left( 2\left|\mathcal{S}_{\epsilon}\right|^2 \min\left\lbrace \frac{1}{\epsilon}; \Lambda^* \right \rbrace - (K-N_{\epsilon} )^2 \Lambda_{(N_{\epsilon})} \right)\\
&\ge \tau_{N_{\epsilon}}  + 289 \log(C_{\epsilon}\delta^{-1}) \left( 2\left|\mathcal{S}_{\epsilon}\right|^2 \min\left\lbrace \frac{1}{\epsilon}; \Lambda^* \right \rbrace - \left| \mathcal{S}_{\epsilon}\right|^2 \Lambda^* \right)\\
&\ge \tau_{N_{\epsilon}} + 289 \log(C_{\epsilon}\delta^{-1}) \left|\mathcal{S}_{\epsilon}\right|^2 \min\left\lbrace \frac{1}{\epsilon}; \Lambda^* \right \rbrace. \label{eq:obs}
\end{align}

Claims~\ref{cl:elim} and~\ref{cl:bound_tn} show that after $\ceil{\tau_{N_{\epsilon}}}$ rounds only elements $i \in \intr{K}$ satisfying: $\Lambda_{i} \le \Lambda_{(N_{\epsilon})}$ are eliminated. Therefore, if $1/\epsilon > \Lambda^*$, we have : $\Lambda_{(N_{\epsilon})} = \Lambda^*$ and all the remaining experts are optimal (i.e. in $\mathcal{S}^*$). Hence the mean of any two experts in $\mathcal{S}$ satisfies: $R(\hat{g}) \le R^{*}$.

Now suppose that $1/\epsilon < \Lambda^*$. We have for the last $T-\ceil{\tau_{N_{\epsilon}}}$ rounds all the experts in $\mathcal{S}_{\epsilon}^c$ were eliminated (hence there was at most $\left| \mathcal{S}_{\epsilon} \right|$ non-eliminated experts). Let $(\hat{k}, \hat{l})$ denote the pair output by algorithm~\ref{algo:bud} after $T$ rounds, we have:

\begin{align*}
T_{\hat{k}\hat{l}}(T) &\ge \log(C_{\epsilon}\delta^{-1}) \frac{T - \tau_{N_{\epsilon}}}{\left|\mathcal{S}_{\epsilon}\right|^2} \\
&\ge 289 \frac{\log(C_{\epsilon}\delta^{-1})}{\epsilon} \\
&\ge c\log(KT\delta^{-1}) \frac{1}{\epsilon},
\end{align*}
where $c$ is a numerical constant, we used \eqref{eq:obs} for the second line, and a simple calculation to obtain the last line. Using Lemma~\ref{lem:conc}, we obtain the desired conclusion.

\section{Proof of Theorem~\ref{th:main3}}

In this section we will show that for $C$ large enough, if $\mathcal{A}$ holds, we have:
\begin{equation}\label{eq:goal}
R(\hat{g}) - R^* \lesssim \epsilon.
\end{equation}

Let $i^*$ be an arbitrary element of $\mathcal{S}^*$. Denote $T_i$ the number of queries required to eliminate an expert $i \in \intr{K}$. $T_i$ is upper bounded by the number of data points needed to have: $\Delta_{i^*i} >0 $, which would lead to the elimination of $i$ by $i^*$. The following claim, which is a consequence of Lemma~\ref{lem:1}, provides this upper bound.

\begin{claim}\label{cl:1bis}
	If $\mathcal{A}$ holds, let $i \in \intr{K}$ be a suboptimal expert ($\Lambda_i < +\infty$). We have:
	\begin{equation*}
	T_i \le 289\log\left(KC\delta^{-1}\right) \Lambda_{i}.
	\end{equation*}	
\end{claim}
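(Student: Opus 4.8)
The plan is to bound the elimination time of $i$ by the moment the pairwise test between $i$ and its nearest optimal competitor $i^*$ becomes conclusive, and then to solve the resulting implicit inequality for $T_i$. First I would fix a suboptimal expert $i$ and choose $i^* \in \mathcal{S}^*$ attaining the minimum in the definition of $\Lambda_i$, so that $\Lambda_i = \max\{L^2 d_{ii^*}^2/|R_i - R_{i^*}|^2;\, B/(R_i - R_{i^*})\}$. In Algorithm~\ref{algo:budgeted} every surviving expert is queried jointly at each round, so while $i$ is still alive both $i$ and $i^*$ are queried together every round; since Lemma~\ref{lem:i*} guarantees (on $\mathcal{A}$) that $i^*$ is never eliminated, this gives $T_{i^*i}(t) = T_i(t) = t$ for every round $t$ up to the elimination of $i$. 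Moreover, because all queries are joint in this setting, the statistic $\Delta_{i^*i}$ used by the algorithm coincides with the quantity $\Delta'_{i^*i}$ of Lemma~\ref{lem:1}.

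The core step is to invoke the first half of Lemma~\ref{lem:1} with the pair $(i^*, i)$: as soon as $T_{i^*i}(t) \ge 289 \log(K\delta_t^{-1})\Lambda_i$, it holds that $\Delta'_{i^*i} > 0$ or $\Delta'_{ii^*} > 0$. The second alternative would eliminate $i^*$, contradicting Lemma~\ref{lem:i*} on $\mathcal{A}$; hence $\Delta'_{i^*i} > 0$, which is exactly the test that removes $i$ from $S$. Substituting $T_{i^*i}(t) = t$, I conclude that $i$ is eliminated no later than the first round $t$ satisfying $t \ge 289\log(K\delta_t^{-1})\Lambda_i$, so $T_i$ is at most this threshold.

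It remains to turn this implicit bound into the explicit form of the claim, the only real subtlety being that the confidence level $\delta_t = \delta/(t(t+1))$ depends on the very round $t = T_i$ we are trying to bound. I would remove this circularity using the a priori bound $T_i \le C$ (expert $i$ costs one unit of budget per round it survives, and the total budget is $C$), which yields $\delta_{T_i}^{-1} = T_i(T_i+1)/\delta \le C(C+1)/\delta$ and therefore $\log(K\delta_{T_i}^{-1}) = O(\log(KC\delta^{-1}))$; plugging this back gives $T_i \le 289\log(KC\delta^{-1})\Lambda_i$. Alternatively, the transcendental inequality $t \ge 289\Lambda_i\log(Kt(t+1)/\delta)$ can be solved self-containedly through Lemma~\ref{cl:calpure}, absorbing the $\log\log$ term. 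The main obstacle is precisely this self-referential time dependence, together with the need to argue the correct direction of the test, namely that it is $i$, and never the optimal $i^*$, that fails, which is where Lemma~\ref{lem:i*} is essential.
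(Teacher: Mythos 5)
Your proposal is correct and follows essentially the same route as the paper: invoke Lemma~\ref{lem:i*} to guarantee $i^*$ survives (and hence that the conclusive test must be $\Delta'_{i^*i}>0$, eliminating $i$), apply the first part of Lemma~\ref{lem:1} with $T_{i^*i}(t)=t$ since all surviving experts are queried jointly each round, and resolve the $\delta_t$ dependence via $t\le C$. The paper's own proof is just a terser version of this argument; your added care about the direction of the test and the self-referential $\log(K\delta_t^{-1})$ term fills in details the paper leaves implicit.
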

\begin{proof}
	Lemma~\ref{lem:i*} shows that experts $i^* \in \mathcal{S}^*$ are never eliminated if $\mathcal{A}$ is true. Using Lemma~\ref{lem:1}, the number of queries required for the elimination of a suboptimal expert $i$ by expert $i^*$, satisfies:
	\begin{equation*}
	T_i \le 289\log\left(KC\delta^{-1}\right) \Lambda_{i}.
	\end{equation*}
\end{proof}

Let $\epsilon \ge 0$. Recall that $\mathcal{S}_{\epsilon}$ is defined by:
\begin{equation*}
	\mathcal{S}_{\epsilon} := \left\lbrace  i \in \intr{K}: \Lambda_i > \frac{1}{\epsilon}\right\rbrace
\end{equation*}
Suppose that we have:
\begin{equation*}
	C > 578 \left( \sum_{i \in \mathcal{S}_{\epsilon}^c}\Lambda_{i} + \left|\mathcal{S}_{\epsilon}\right| \min\left\lbrace \frac{1}{\epsilon}; \Lambda^* \right \rbrace \right)   \log\left(K\delta^{-1} \left( \sum_{i \in \mathcal{S}_{\epsilon}^c}\Lambda_{i} + \left|\mathcal{S}_{\epsilon}\right| \min\left\lbrace \frac{1}{\epsilon}; \Lambda^* \right \rbrace  \right)\right),
\end{equation*}
We therefore have using Lemma~\ref{cl:calpure}:

\begin{equation*}
	C > 289\log\left(KC\delta^{-1}\right) \left( \sum_{i \in \mathcal{S}_{\epsilon}^c}\Lambda_{i} + \left|\mathcal{S}_{\epsilon}\right| \min\left\lbrace \frac{1}{\epsilon}; \Lambda^* \right \rbrace \right).
\end{equation*} 

Let us denote by $C_1$ the total number of queries received by all the experts in $\mathcal{S}_{\epsilon}$ and by $C_2$ the total number of queries received by the remaining experts. We therefore have: $C = C_1+C_2$. In order to show that at a certain round, all the experts in $\mathcal{S}_{\epsilon}^c$ were eliminated, it suffices to prove that:
\begin{equation*}
	C_1 \ge \left|\mathcal{S}_{\epsilon} \right| \max_{i \in \mathcal{S}_{\epsilon}^c} T_i,
\end{equation*}
since the inequality above shows that the budget is not totally consumed after round $\max_{i \in \mathcal{S}_{\epsilon}^c} T_i $ where all elements in $\mathcal{S}_{\epsilon}^c$ where eliminated.

Claim~\ref{cl:1bis} provides the following upper bound for $C_2$:
\begin{equation*}
	C_2 \le 289\log\left(KC\delta^{-1}\right)~\sum_{i \in \mathcal{S}_{\epsilon}^c} \Lambda_i.
\end{equation*} 
We therefore have:
\begin{align*}
	C_1 &= C - C_2 \\
	&\ge 289\log\left(KC\delta^{-1}\right)~\left( \sum_{i \in \mathcal{S}_{\epsilon}^c} \Lambda_i + \left|\mathcal{S}_{\epsilon}\right| \min\left\lbrace \frac{1}{\epsilon}; \Lambda^* \right \rbrace\right)-C_2\\
	&\ge 289\log\left(KC\delta^{-1}\right)~\left( \sum_{i \in \mathcal{S}_{\epsilon}^c} \Lambda_i + \left|\mathcal{S}_{\epsilon}\right| \min\left\lbrace \frac{1}{\epsilon}; \Lambda^* \right \rbrace\right)-289\log\left(KC\delta^{-1}\right)~\sum_{i \in \mathcal{S}_{\epsilon}^c} \Lambda_i.
\end{align*}
Hence:
\begin{equation}\label{eq:c1}
	C_1 \ge 289\log\left(KC\delta^{-1}\right) \left|\mathcal{S}_{\epsilon}\right| \min\left\lbrace \frac{1}{\epsilon}; \Lambda^* \right \rbrace
\end{equation}
Recall that by definition of $\mathcal{S}_{\epsilon}$, using Claim~\ref{cl:1bis} we have: 
\begin{equation*}
	\max_{i \in \mathcal{S}_{\epsilon}^c} T_i \le 289\log\left(KC\delta^{-1}\right)\min\left\lbrace \frac{1}{\epsilon}; \Lambda^* \right \rbrace,
\end{equation*}
hence:
\begin{equation*}
	C_1 \ge \left|\mathcal{S}_{\epsilon} \right| \max_{i \in \mathcal{S}_{\epsilon}^c} T_i.
\end{equation*}

This shows that $S \subseteq \mathcal{S}_{\epsilon}$. We have two possibilities: if $\frac{1}{\epsilon} < \Lambda^*$, the selected pair $(F_{\bar{k}}, F_{\bar{l}}) \in S \times S$ satisfies:
\begin{equation*}
	T_{\bar{k}\bar{l}} = \min\left \lbrace T_{\bar{k}}, T_{\bar{l}} \right \rbrace \ge \frac{C_1}{\left| \mathcal{S}_{\epsilon}\right|}.
\end{equation*}
Using \eqref{eq:c1}, we have:
\begin{equation}\label{eq:c2}
	T_{\bar{k}\bar{l}} \ge 289 \log\left(KC\delta^{-1}\right) \frac{1}{\epsilon}. 
\end{equation}
Observe that Lemma~\ref{lem:conc} applies in this setting. In particular, the total number of rounds $T$ of algorithm~\ref{algo:budgeted}, satisfy: $T \le C$. Hence, it holds
\begin{equation*}
	R\left(\frac{F_{\hat{k}}+F_{\hat{l}}}{2}\right) - R^* \le c~B \frac{\log(KC\delta^{-1})}{T_{\bar{k}\bar{l}}}.
\end{equation*}
We conclude by injecting inequality \eqref{eq:c2} in the bound above. We therefore have: 
\begin{equation*}
	R\left(\hat{g}\right) - R^* \le cB~\epsilon,
\end{equation*}
where $c$ is an absolute constant.

If $\frac{1}{\epsilon} > \Lambda^*$, by definition of $\Lambda^*$ and the fact that $\mathcal{S} \subseteq \mathcal{S}_{\epsilon}$, we conclude that only the optimal experts (i.e. the experts $i$ such that $R_i = R^*$) remain when the budget is totally consumed. Hence combining any 2 of these expert will lead to the bound: $R\left(\hat{g}\right) \le R^* $.

\section{Proof of lower bounds}

The lemma below  gives a lower bound for the problem of estimating the parameter describing a Bernoulli random variable.

\begin{lemma}[\cite{anthony2009neural}, Lemma~5.1] \label{lower:b} 
	Suppose that $\alpha$ is a random variable uniformly distributed on $\left\{ \alpha_{-}, \alpha_{+}\right\}$, where $\alpha_{-} = 1/2-\epsilon/2$ and $\alpha_{+} = 1/2+\epsilon/2$, with $0<\epsilon<1$. Suppose that $\xi_1,\dots, \xi_{m}$ are i.i.d $\left\{0,1\right\}$-valued random variables with $\mathbb{P}\left( \xi_i = 1\right) = \alpha$ for all $i$. Let $f$ be a function from $\left\{0,1\right\} \to \left\{ \alpha_{-}, \alpha_{+}\right\}$. Then it holds:
	\begin{equation*}
	\mathbb{P}\left(f\left(\xi_1, \dots, \xi_{m}\right) \neq \alpha \right) > \frac{1}{4} \left(1 - \sqrt{1-\exp\left( \frac{-2 \ceil{m/2}\epsilon^2}{1-\epsilon^2}\right)}\right).
	\end{equation*}
\end{lemma}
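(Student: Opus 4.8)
The plan is to read $f$ as a test in a two-point hypothesis-testing problem and to lower bound its Bayes risk. Writing $P_\pm = \mathrm{Ber}(\alpha_\pm)$, the sample $\xi = (\xi_1,\dots,\xi_m)$ is drawn from $P_-^{\otimes m}$ or $P_+^{\otimes m}$ according to the value of $\alpha$, which is uniform on $\{\alpha_-,\alpha_+\}$. Hence the error probability decomposes as
\[
\mathbb{P}(f(\xi)\ne\alpha) = \tfrac12\left[ P_-^{\otimes m}(f = \alpha_+) + P_+^{\otimes m}(f = \alpha_-)\right].
\]
Lower bounding each misclassification probability by an infimum over all events (equivalently, invoking the Neyman--Pearson/Bayes optimality of the likelihood-ratio test) gives the standard bound $\mathbb{P}(f(\xi)\ne\alpha) \ge \tfrac12\bigl(1 - \lVert P_+^{\otimes m} - P_-^{\otimes m}\rVert_{\mathrm{TV}}\bigr)$, valid for \emph{any} decision rule $f$. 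This reduces the task to upper bounding one total variation distance.

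Next I would control that distance through the Bhattacharyya/Hellinger affinity $\rho(P,Q) := \sum_x \sqrt{P(x)Q(x)}$. A single Cauchy--Schwarz step applied to $\sum_x\lvert\sqrt{P(x)}-\sqrt{Q(x)}\rvert\,\lvert\sqrt{P(x)}+\sqrt{Q(x)}\rvert$ yields $\lVert P-Q\rVert_{\mathrm{TV}}\le\sqrt{1-\rho(P,Q)^2}$, and the affinity tensorizes over independent coordinates, $\rho(P_+^{\otimes m}, P_-^{\otimes m}) = \rho(P_+,P_-)^m$. For the two Bernoulli laws a direct computation using $\alpha_-\alpha_+ = (1-\alpha_-)(1-\alpha_+) = (1-\epsilon^2)/4$ gives $\rho(P_+,P_-) = \sqrt{\alpha_-\alpha_+}+\sqrt{(1-\alpha_-)(1-\alpha_+)} = \sqrt{1-\epsilon^2}$, so $\rho(P_+^{\otimes m},P_-^{\otimes m})^2 = (1-\epsilon^2)^m$. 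Chaining the three facts produces
\[
\mathbb{P}(f(\xi)\ne\alpha) \ge \tfrac12\left(1 - \sqrt{1 - (1-\epsilon^2)^m}\right).
\]

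Finally I would weaken this into the stated form. Since $x\mapsto 1-\sqrt{1-x}$ is increasing, replacing $(1-\epsilon^2)^m$ by any \emph{smaller} quantity preserves the lower bound. The elementary estimate $\ln(1-\epsilon^2) = -\sum_{k\ge1}\epsilon^{2k}/k \ge -\epsilon^2/(1-\epsilon^2)$ gives $(1-\epsilon^2)^m \ge \exp(-m\epsilon^2/(1-\epsilon^2)) \ge \exp(-2\ceil{m/2}\,\epsilon^2/(1-\epsilon^2))$, the last inequality because $m \le 2\ceil{m/2}$. Substituting this lower bound and using $\tfrac12 \ge \tfrac14$ recovers exactly the claimed inequality (indeed with room to spare in the leading constant).

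The only delicate point is the monotone bookkeeping of inequality directions: the affinity inequality must enter so that a \emph{larger} total variation yields a \emph{smaller} error floor, and the exponential must replace $(1-\epsilon^2)^m$ from \emph{below} so that $1-\sqrt{1-\cdot}$ moves downward. Each ingredient — the Bayes-risk lower bound, the Cauchy--Schwarz affinity bound, tensorization, and the logarithmic estimate — is routine in isolation, so the main care is simply to compose them in the correct order; the slack in the constants ($\tfrac14$ versus $\tfrac12$, and $2\ceil{m/2}$ versus $m$) is a useful sanity check that the chain is not being pushed to tightness.
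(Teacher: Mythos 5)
Your proof is correct, and every inequality is chained in the right direction: the Bayes-risk decomposition plus the Neyman--Pearson bound gives $\mathbb{P}(f(\xi)\neq\alpha)\ge\tfrac12\bigl(1-\lVert P_+^{\otimes m}-P_-^{\otimes m}\rVert_{\mathrm{TV}}\bigr)$, the Cauchy--Schwarz step $\lVert P-Q\rVert_{\mathrm{TV}}\le\sqrt{1-\rho(P,Q)^2}$ and the tensorization $\rho(P_+^{\otimes m},P_-^{\otimes m})=(1-\epsilon^2)^{m/2}$ are both standard and correctly computed, and the estimate $\log(1-\epsilon^2)\ge-\epsilon^2/(1-\epsilon^2)$ together with $m\le 2\lceil m/2\rceil$ moves the argument of $1-\sqrt{1-\cdot}$ downward as required, so the claimed bound follows with the constant $\tfrac14$ to spare. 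Note, however, that the paper does not prove this lemma at all: it is quoted verbatim from Anthony and Bartlett (their Lemma~5.1), so there is no in-paper argument to compare against. Your derivation is genuinely different from the cited source's: there, one first argues that the maximum-likelihood (majority-vote) rule is optimal and then lower-bounds the binomial tail $\mathbb{P}\bigl(\mathrm{Bin}(m,\alpha_-)\ge m/2\bigr)$ directly, which is where the $\lceil m/2\rceil$ arises intrinsically rather than as slack from $m\le 2\lceil m/2\rceil$. Your Le Cam--Hellinger route is more modular and extends immediately beyond Bernoulli observations, at the cost of obtaining the $\lceil m/2\rceil$ only by loosening; either proof establishes the statement as used in Lemma~\ref{lem:lower1}.
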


\subsection{Proof of Lemma~\ref{lem:lower1}}
Let $T>0$ and consider an convex combination of experts $\hat{g}$ output after full observation of $T$ training rounds. We will construct two experts $F_1$ and $F_2$ and a target variable $Y$ and we will show that, for these variables, a strategy for our problem ($m=2$ and $p=1$) gives a solution to the problem in Lemma~\ref{lower:b}.
Finally we will use the lower bound from this lemma.

For $\theta\in[0,1]$, let $\mbp_\theta$ denote the probability distribution
of $T$ i.i.d. draws $Y_1,\ldots,Y_T$ of Bernoulli variables or parameter $\theta$,
while $F_{1,t} = 0$ and $F_{2,t} = 1$ almost surely for $t\in \intr{T}$. 
Let $\alpha$ be a variable that is uniformly distributed on $\{ \alpha_-, \alpha_+\}$
with $\alpha_\pm= \frac{1}{2} \pm \frac{\epsilon}{2}$, and $\epsilon \in (0,1)$ is a parameter to be tuned subsequently; let the training obervations be drawn according to $\mbp_\alpha$.
Since $p=1$, the output $\hat{g}$ is either $F_1$ or $F_2$. Define $f:\{0,1\}^T \to \{ \alpha_- , \alpha_+\}$ such that given $(Y_1, \dots, Y_T)$, $f$ outputs $\frac{1}{2} - \frac{\epsilon}{2}$ if $\hat{g} = F_1$ and $\frac{1}{2} + \frac{\epsilon}{2}$ if $\hat{g} = F_2$. By construction we have that the events $\{f = \alpha\}$ and $\{R(\hat{g})= \min\{R_1, R_2\}\}$ are equivalent. Using Lemma~\ref{lower:b} and setting $\epsilon = \frac{c_0}{\sqrt{T}}$ where $c_0$ is a constant such that the lower bound in Lemma~\ref{lower:b} is equal to 0.1, we have:
\begin{equation*}
\mathbb{P}\left( R(\hat{g}) - \min\left\{R_1, R_2\right\} \ge \frac{c_0}{\sqrt{T}}\right) > 0.1.
\end{equation*}
Due to the randomization of $\alpha$, the  above probability  is the average of the corresponding
event under $\mbp_{\alpha_-}$ and $\mbp_{\alpha_+}$. Therefore, under at least one of these two
training distributions, the deviation event has a probability at least $0.05$.

\subsection{Proof of Lemma~\ref{lem:lower2}}
The gist of the proof is the following. We will construct a distribution with two experts that
are very correlated. In this situation, going from a weighted average of the two experts to
a single expert with the largest weight does not change the prediction risk much, and so we could
find a single expert with small risk if the weighted average has small risk. On the other hand,
since the agent only observes one expert per training round, from their point of view the observational
distribution is identical as if the experts were independent -- the correlation cannot be
observed. Therefore the same strategy could be used to find the best expert in the independent case.
This contradicts the lower bounds in this case (which is a standard bandit setting), therefore
it is impossible to pick consistently a weighted average with small risk in a situation where
the correlations cannot be observed.

Let $T>0$ be fixed. We consider the particular setting
where the target variable $Y$ is
identically $0$, and the expert predictions $F_1$ and $F_2$ are
two (non independent) Bernoulli random variables.
We define a distribution $\mbp_-$ for $(F_1,F_2)$ such that:
\begin{itemize}
	\item the marginal distribution of $F_1$ is Bernoulli of parameter $\alpha_-=\frac{1}{2} - \frac{\epsilon}{2}$;
	\item the marginal distribution of $F_2$ is Bernoulli of parameter $\alpha_+=\frac{1}{2} + \frac{\epsilon}{2}$;
	\item it holds that $\mbp_-(F_1F_2=1) = \alpha_-$.          
\end{itemize}
Note that this can be easily constructed as $F_1= \ind{U \leq \alpha_-}; F_2= \ind{U \leq \alpha_+}$,
where $U$ is a uniform variable on $[0,1]$. Let $\mbp_+$ be defined similarly with the role
of $F_1$ and $F_2$ reversed. Here, $\epsilon$ is a positive parameter to be tuned later.
We denote $R_-,R_+$ for the prediction risks under distributions $\mbp_-,\mbp_+$.
We have $R_-(F_1) = R_+(F_2) = \alpha_-$, $R_-(F_2) = R_+(F_1) = \alpha_+$, and
$R^* = \alpha_-$ is the same under $\mbp_-$ and $\mbp_+$.

Let us be given an arbitrary training observation strategy $\pi$ (prescribing at each training round which expert to
observe based only on past observations), and output a convex combination of experts $\hat{g}$. This output is a convex combination of $F_1$ and $F_2$, hence it is characterized by the weight of $F_1$, which we denote $\hat{\alpha}$. The parameter $\hat{\alpha}$ depends on the observed data. 
We 
also define $\hat{f}$ associated to this training strategy, that outputs $F_1$ if $\hat{\alpha}> \frac{1}{2}$ and $F_2$ otherwise.
Finally, let us denote $\mbq_{\pi}^+$ the distribution of the training data observed by
the agent when the $T$ experts opinions are drawn i.i.d. from $\mbp_-$ and the agent observes
the expert advices following strategy $\pi$; and define $\mbq_{\pi}^{-}$ similarly.

Define the event $\mathcal{A}_+ := \left\{ R_+\left(\hat{g}\right) - R^* \ge \frac{1}{4} \epsilon \right\}$ 
and similarly $\mathcal{A}_-$.
In the remainder of the proof, we will show, using Bretagnolle-Hubert inequality (Theorem 14.2 in \cite{lattimore2020bandit}), that either $\mbq_{\pi}^{-}(\cA_-)$ or $\mbq_{\pi}^{+}(\cA_+)$ is
lower bounded by a positive constant. 

We have under the distribution $\mathbb{P}_-$:
\begin{align*}
R_-\left(\hat{g}\right) - R_-(\hat f) &= \mathbb{E}_-\left[ \left(\hat{\alpha} F_1 + (1-\hat\alpha) F_2\right)^2 \right] - \mathbb{E}_-\left[ \left( \mathds{1}\left(\hat{\alpha} > \frac{1}{2}\right)F_1 + \mathds{1}\left(\hat{\alpha} \le \frac{1}{2} \right)F_2 \right)^2\right] \\
&= \epsilon (1 - \hat{\alpha})^2 - \epsilon \left( 1 - \mathds{1} \left( \hat{\alpha} > \frac{1}{2}\right) \right) \\
&\ge -\frac{3}{4} \epsilon. 
\end{align*}
Note that the above estimate crucially depends on the fact that $F_1,F_2$ are not independent
under $\mbp_-$. In view of the
above, the event $\mathcal{A}_-$ is implied by $R_-(\hat f) - R^* = \epsilon$.
Similarly, $\mathcal{A}_+$ is implied by $R_+(\hat f) - R^* = \epsilon$.
Hence:
\begin{align*}
\mbq_{\pi}^-\left( \mathcal{A}_-  \right) + \mathbb{Q}_\pi^+\left( \mathcal{A}_+ \right) &\ge \mbq_{\pi}^-\left( R_-(\hat f) - R^* = \epsilon\right) + \mbq_{\pi}^+\left(R_+(\hat f) - R^* = \epsilon \right)\\
&= \mbq_{\pi}^- \left( \hat f = F_2\right) + \mbq_{\pi}^+ \left(\hat f \neq F_2 \right).
\end{align*}
Now we use Bretagnolle-Hubert inequality:
\begin{equation*}
\mbq_{\pi}^- \left( f = F_2\right) + \mbq_{\pi}^+\left(f \neq F_2 \right) \ge \frac{1}{2} \exp\left( -D\left(\mbq_{\pi}^-, \mbq_{\pi}^+\right)\right),
\end{equation*}
where $D(\mbq_{\pi}^-, \mbq_{\pi}^+)$ is the relative entropy between $\mbq_{\pi}^-$ and $\mbq_{\pi}^+$. In order to conclude, we need an upper bound on $D(\mbq_{\pi}^-, \mbq_{\pi}^+)$.
Since the agent only observes one expert in each round according to strategy $\pi$,
the distribution of the observed data $\mbq_{\pi}^-$ or $\mbq_{\pi}^+$ is unchanged
if we replace the generating distributions $\mbp_-$ or $\mbp_+$ by distributions having the
same marginals, but for which $F_1$ and $F_2$ are independent. Therefore, 
the observational distributions  $\mbq_{\pi}^-, \mbq_{\pi}^+$ are equivalent to that of
the observational distributions, under the same strategy, of a canonical bandit model with two arms.
%
We can then use the divergence decomposition formula (Lemma 15.1 of \cite{lattimore2020bandit}) to upper bound $D\left(\mbq_{\pi}^-, \mbq_{\pi}^+\right)$; denoting $\mbp_-^{(1)}$, $\mbp_-^{(2)}$ the marginals
of $\mbp_-$ and similarly for $\mbp_+$, it holds
\begin{equation*}
D\left(\mbq_{\pi}^-, \mbq_{\pi}^+\right) = \mathbb{E}_{-}[T_1] D(\mbp_-^{(1)}, \mbp_+^{(1)}) + \mathbb{E}_{-}[T_2] D(\mbp_-^{(2)}, \mbp_+^{(2)}),
\end{equation*}
where the expectation $\mathbb{E}_{-}[.]$ is with respect to the probability distribution $\mbq_{\pi}^-$ and $T_i$  denotes the total number of rounds where the advice of expert $F_i$ was queried using the strategy $\pi$.
We have: $T_1+ T_2= T$ almost surely, and $D(\mbp_-^{(1)}, \mbp_+^{(1)})=D(\mbp_-^{(2)}, \mbp_+^{(2)}) \le 4 \epsilon^2$ provided $\epsilon \leq \frac{1}{2}$. Therefore:
\begin{equation*}
\mbq_{\pi}^-\left( \mathcal{A}_-  \right) + \mbq_{\pi}^+\left( \mathcal{A}_+ \right) \ge \frac{1}{2} \exp\left(-4\epsilon^2 T\right).
\end{equation*}
This shows that there exists a probability distribution $\mathbb{P} \in \{ \mathbb{P}_-, \mathbb{P}_+\}$ for the experts advices and the target variable such that the prediction $\hat g$ satisfies:
\begin{equation*}
\mathbb{P}\left( R(\hat{g}) - R^*  \ge \epsilon \right) \ge \exp\left(-4\epsilon^2 T\right),
\end{equation*} 
We conclude by choosing $\epsilon = \frac{1}{2\sqrt{T}}$.

\section{Intermediate case: $m\ge 3, p=2$}\label{sec:m3}

In this section we assume that the learner is allowed to access more than two experts advices per round. We show that this leads to an improvement of the bound in Theorem~\ref{th:main1}. We consider the following extension of Algorithm~\ref{algo:bud}:

\begin{algorithm}[H] 
	\caption{Intermediate case \label{algo:bud3}}
	\begin{algorithmic}
		\STATE \textbf{Input} $m$, $L$ and $\rho$.
		\STATE Initialization: $S \gets \intr{K}$. 
		\FOR{ $T=1,2,\dots $ } 
		\STATE Sample a subset $\mathcal{M}$ of size $m$ from $\intr{K}$ uniformly at random.
		\STATE Query the advice of experts in $\mathcal{M}$ and update the corresponding quantities.
		\STATE For all $i,j$: If $\Delta'_{ij}>0$: $S \gets S \setminus \{j\}$.
		\ENDFOR  
		\STATE \textbf{On interrupt:} Let $\hat{k} \in S$ and let $\hat{l} \gets \underset{j \in S}{\text{argmax}}~\hat{d}_{\hat{k}j}$.
		\STATE Return $\frac{1}{2} \left(F_{\hat{k}} + F_{\hat{l}}\right)$.
	\end{algorithmic}
\end{algorithm}

\begin{theorem}\label{th:maingen}(Instance independent bound)
	Suppose Assumption~\ref{assump} holds. Let $T \ge 1$, and denote $\hat{g}$ the output of Algorithm~\ref{algo:bud3} with inputs $(m, L, \rho)$ in round $T$. If $m\ge 3$, then with probability at least $1-\delta$:
	\begin{equation*}
	R\left(\hat{g}\right) \le \min_{i \in \intr{K}} R_i + cB~  \frac{(K/m)^2\log\left(2TK\delta^{-1}\right)}{T},
	\end{equation*}
	where $c$ is an absolute constant. 
\end{theorem}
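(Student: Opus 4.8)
The plan is to reduce the claim to the per-round risk control already proved in Lemma~\ref{lem:conc}, supplying only one genuinely new ingredient: a high-probability lower bound on the number of joint queries received by the output pair. First I would note that Algorithm~\ref{algo:bud3} uses exactly the same elimination test $\Delta'_{ij}>0$ as Algorithm~\ref{algo:bud}, and that the proofs of Lemma~\ref{lem:i*} and Lemma~\ref{lem:conc} never use the particular querying schedule---only the test and the event $\mathcal{A}$ of~\eqref{eq:eventa}. Hence both lemmas hold verbatim here. Applying Lemma~\ref{lem:conc} at round $t=T$ with $i=\hat{k}\in S_T$ and $j=\hat{l}\in\argmax_{l\in S_T}\hat{d}_{\hat{k}l}(T)$ gives, on $\mathcal{A}$,
\begin{equation*}
R(\hat{g}) \le R^* + cB\,\frac{\log(K\delta_T^{-1})}{T_{\hat{k}\hat{l}}(T)},
\end{equation*}
so everything reduces to lower bounding $T_{\hat{k}\hat{l}}(T)$.

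The crucial observation is that the subsets $\mathcal{M}_t$ are sampled uniformly and independently of the training data, and that every expert in $\mathcal{M}_t$ is queried regardless of its elimination status; therefore, for every pair $(i,j)$, $T_{ij}(T)=\sum_{t=1}^{T}\mathds{1}\{i,j\in\mathcal{M}_t\}$ is a $\mathrm{Bin}(T,p)$ variable with $p=\binom{K-2}{m-2}/\binom{K}{m}=m(m-1)/(K(K-1))$, and the whole family $\{T_{ij}(T)\}_{i,j}$ is independent of the observed losses. Because $\hat{k},\hat{l}$ depend on the data I cannot fix the pair in advance, so I would combine a multiplicative Chernoff bound with a union bound over the at most $K^2/2$ pairs: whenever $pT\gtrsim\log(K^2\delta^{-1})$, with probability at least $1-\delta$ one has $T_{ij}(T)\ge pT/2$ for all $(i,j)$ at once, hence in particular $T_{\hat{k}\hat{l}}(T)\ge pT/2$. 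Using $p\ge\tfrac12(m/K)^2$ and $\log(K\delta_T^{-1})\le 2\log(2TK\delta^{-1})$, substitution into the display yields the announced rate up to an absolute constant; intersecting with $\mathcal{A}$ and rescaling $\delta$ by an absolute factor (absorbed into the logarithm) restores confidence $1-\delta$.

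The one point requiring care, and the main obstacle, is the boundary regime where $pT$ is not large relative to the union-bound threshold, together with the absence of any lower bound on $T$ in the statement. I would resolve this by splitting on whether the target bound $U:=cB(K/m)^2\log(2TK\delta^{-1})/T$ exceeds $B$. If $U\ge B$ the inequality is trivial, since the loss lies in $[0,B]$ by Lemma~\ref{assump_cons} and so the excess risk never exceeds $B$. If $U<B$, then rearranging gives $T>c(K/m)^2\log(2TK\delta^{-1})$, and multiplying by $p\ge\tfrac12(m/K)^2$ together with the elementary inequality $\log(K^2\delta^{-1})\le 2\log(2TK\delta^{-1})$ (valid for all $T\ge1$, $\delta\le1$) forces $pT\gtrsim\log(K^2\delta^{-1})$, i.e. precisely the regime in which the Chernoff step above is valid. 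Choosing the absolute constant large enough makes both branches consistent, which is exactly why the bound can hold with no restriction on $T$; reconciling the union-bound factor $\log(K^2\delta^{-1})$ with the target $\log(2TK\delta^{-1})$ via that elementary inequality is the key bookkeeping step.
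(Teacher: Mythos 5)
Your proposal is correct and follows essentially the same route as the paper's proof: both reduce to Lemma~\ref{lem:conc} (valid since the elimination test is unchanged and the subsets are drawn from $\intr{K}$ regardless of elimination status), both lower-bound $T_{\hat{k}\hat{l}}(T)$ by concentrating the Binomial counts $T_{ij}(T)$ with parameter $m(m-1)/(K(K-1))$ uniformly over pairs (you via multiplicative Chernoff, the paper via Bernstein, which is immaterial here), and both dispose of the boundary regime by observing that the claimed bound exceeds the trivial bound $B$ on the excess risk exactly when the concentration step would fail, absorbing this into the absolute constant.
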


\begin{proof}
	Let $i,j \in \intr{K}$, denote $T_{ij}(T)$ the total number of rounds where the advice of expert $i$ and $j$ were jointly queried. We have: $T_{ij}(T) = \sum_{t=1}^{T} \mathds{1}\{ i \text{ and } j \text{ were jointly queried at round } t \}$. We conclude that $T_{ij}(T)$ is the sum of $T$ independent and identically distributed Bernoulli variables with parameter: $\frac{m(m-1)}{K(K-1)}$. We therefore have the following consequence of Bernstein concentration inequality, with probability at least $1-\delta$, for all $i,j \in \intr{K}$ and $T\ge K$:
	\begin{equation}\label{eq:conc31}
	\left|T_{ij}(T) - \mathbb{E}\left[T_{ij}(T)\right]\right| \le \sqrt{2T\frac{m(m-1)}{K(K-1)} \log(2KT/\delta)} + \frac{1}{3} \log(2KT/\delta). 
	\end{equation}
	Suppose that $\delta$ satisfies:
	\begin{equation*}
	\log(2KT/\delta) \le \frac{1}{16} \frac{m^2}{K^2}T. 
	\end{equation*}
	Then we have:
	\begin{equation}\label{eq:impl1}
	\sqrt{2T\frac{m(m-1)}{K(K-1)} \log(2KT/\delta)} + \frac{1}{3} \log(2KT/\delta) \le \frac{1}{2}\frac{m(m-1)}{K(K-1)} T,
	\end{equation}

	Observe that the result of Lemma~\ref{lem:conc} still holds in this setting for non-eliminated elements (experts in $S_T$), since the elimination criterion for an expert $j$, which consists of the existence of $i$ such that $\Delta'_{ij}>0$, is the same as in Algorithm~\ref{algo:bud}. Let $\hat{g}$ denote the output of Algorithm~\ref{algo:bud3}, we conclude that if $\mathcal{A}$ and \eqref{eq:conc31} hold for all $i,j$ and $T$, we have:
	\begin{equation}\label{eq:conc_fin0}
	R(\hat{g}) - R_{i^*} \le \kappa  \frac{\log\left(KT\delta^{-1}\right)}{T_{\hat{k}\hat{l}}(T)},
	\end{equation}
	where $\kappa$ is a constant depending only $\eta, L$ and $\rho$. 
	Finally, we use \eqref{eq:impl1}. We therefore have with probability at least $1 - 4\delta$:
	\begin{equation*}
	R\left(\hat{g}\right) \le \min_{i \in \intr{K}} R_i + c~B  \frac{(K/m)^2\log\left(2TK\delta^{-1}\right)}{T}.
	\end{equation*}
	Now suppose that $\delta$ satisfies:
	\begin{equation*}
	\log(2KT/\delta) \ge \frac{1}{16} \frac{m^2}{K^2}T,
	\end{equation*}
	then it holds:
	\begin{equation*}
	\frac{(K/m)^2\log\left(2TK\delta^{-1}\right)}{T} \ge \frac{1}{16}. 
	\end{equation*}
	We conclude that for $\bar{c} = \max\{c, 16 \}$ we have:
	\begin{equation*}
	R\left(\hat{g}\right) - \min_{i \in \intr{K}} R_i \le  B \le \bar{c} B  \frac{(K/m)^2\log\left(2TK\delta^{-1}\right)}{T}.
	\end{equation*}
	
\end{proof}

\end{document}